\definecolor{Gray}{gray}{0.95}
\theoremstyle{plain}
\newtheorem{theorem}{Theorem}[section]
\newtheorem{lemma}[theorem]{Lemma}
\newtheorem{proposition}[theorem]{Proposition}
\theoremstyle{definition}
\newcommand{\bydef}{\coloneqq}
\renewcommand{\Re}{\mathrm{Re}}
\begin{document}
\title{
\textbf{
Global Continuation of Stable Periodic Orbits \\ in Systems of Competing Predators
}}
\author{
Kevin E. M. Church
\thanks
{Université de Montréal, Centre de Recherches Mathématiques, 
Canada. {\tt kevin.church@umontreal.ca}}
\and Jia-Yuan Dai
\thanks
{National Tsing Hua University, Department of Mathematics, 
Taiwan; 
National Center for Theoretical Sciences,
National Taiwan University, Taiwan. {\tt jydai@math.nthu.edu.tw}}
\and Olivier Hénot
\thanks
{National Taiwan University, Department of Mathematics, 
Taiwan. {\tt olivierhenot@ntu.edu.tw}.}
\and Phillipo Lappicy
\thanks
{Universidad Complutense de Madrid, Departamento de Análisis Matemático y Matemática Aplicada, 
Spain; 
Instituto de Ciencias Matemáticas (ICMAT), CSIC-UAM-UC3M-UCM, Spain. {\tt philemos@ucm.es}}
\and Nicola Vassena
\thanks
{Universität Leipzig, Interdisziplinäres Zentrum für Bioinformatik, 
Germany. {\tt nicola.vassena@uni-leipzig.de}}
}

\date{}

\maketitle

\begin{abstract}
We develop a continuation technique to obtain global families of stable periodic orbits, delimited by transcritical bifurcations at both ends.
We formulate a zero-finding problem whose zeros correspond to families of periodic orbits.
We then define a Newton-like fixed-point operator and establish its contraction near a numerically computed approximation of the family.
To verify the contraction, we derive sufficient conditions expressed as inequalities on the norms of the fixed-point operator, and involving the numerical approximation.
These inequalities are then rigorously checked by the computer via interval arithmetic.
To show the efficacy of our approach, we prove the existence of global families in an ecosystem with Holling's type II functional response, and thereby solve a stable connection problem proposed by Butler and Waltman in 1981.
Our method does not rely on restricting the choice of parameters and is applicable to many other systems that numerically exhibit global families.
\end{abstract}

\begin{center}
{\bf \small Key words:}
{\small periodic orbits, stability, continuation, computer-assisted proofs, \\ Newton--Kantorovich theorem, predator-prey systems}
\end{center}




\section{Introduction}
\label{sec:introduction}

{Understanding stable coexistence in ecological systems remains a central challenge in mathematical biology. To address this, we present an effective continuation method, based on computer-assisted proofs, to obtain global families of stable periodic orbits undergoing transcritical bifurcations at invariant boundary planes. This framework provides a general tool to study the \emph{competitive exclusion principle} and to rigorously explore mechanisms of stable coexistence; see  \cite{gause1934, MathBioBook}.
\\
\indent More specifically,} our interest in this question originated from studying the following system (see \cite{10.1137/0135051, HHW1978}) that describes the population dynamics of two predators $X_j(t)$ for $j = 1, 2$ and one prey $S(t)$:
\begin{equation}\label{eq:original_model}
\begin{cases}
\displaystyle \dot{X}_1 = \left(\frac{m_1 S}{S + a_1} - d_1\right) X_1, \\
\displaystyle \dot{X}_2 = \left(\frac{m_2 S}{S + a_2} - d_2\right) X_2, \\
\displaystyle \dot{S} \hspace{0.2cm} = \left(\gamma \left(1-\frac{S}{\kappa}\right) - \frac{m_1}{y_1}\left(\frac{X_1}{S + a_1}\right) - \frac{m_2}{y_2}\left(\frac{X_2}{S + a_2}\right)\right) S, 
\end{cases}
\end{equation}
with initial conditions in $ \mathbb{R}^3_+ \bydef \{(X_1, X_2, S) \, : \, X_1 > 0, \, X_2 > 0, \, S >0\}$.
The system \eqref{eq:original_model} involves ten positive parameters: $\kappa$ is the carrying capacity of the prey, $\gamma$ is its intrinsic rate of increase, and, for the $j$-th predator, $m_j$ is its maximum birth rate, $a_j$ is its half-saturation constant, $d_j$ is its death rate, and $y_j$ is its yield conversion factor.
The functional response in \eqref{eq:original_model} is called Holling’s type II, which is also known as the Michaelis--Menten kinetics in chemistry; see \cite{10.1007/BF00276918, 10.1137/0143066}.
For experimental results on \eqref{eq:original_model}, see \cite{10.1126/science.6767274} and the references therein.

Since $\mathbb{R}_+^3$ is invariant under the dynamics of the system \eqref{eq:original_model}, solutions with initial conditions in $\mathbb{R}_+^3$ remain \emph{positive}, that is, $(X_1(t), X_2(t), S(t)) \in \mathbb{R}^3_+$ for all $t \in \mathbb{R}$.
Biodiversity described by \eqref{eq:original_model} occurs when all species coexist such that $\liminf_{t \to \infty} X_j(t) > 0$ for $j = 1, 2$ and $\liminf_{t \to \infty} S(t) > 0$.
In other words, coexistence is characterized by the survival of both predators and their prey, {thereby disproving the competitive exclusion principle for the system \eqref{eq:original_model}.} 

Numerical results have long suggested that coexistence in $\mathbb{R}^3_+$ may manifest through periodic orbits or more intricate dynamics.
However, only a few results have been analytically proven, and the existence of periodic orbits is based on strong assumptions about the parameters.
We now comment on the relevant literature.
First, the dynamics near both invariant boundary planes
\begin{equation}
Q_1 \bydef \{(X_1, 0, S) \,: \, X_1 >0, \, S > 0\}, \qquad
Q_2 \bydef \{(0, X_2, S) \, : \, X_2 > 0, \, S > 0\}
\end{equation}
are well understood.
Specifically, in $Q_1$, the system \eqref{eq:original_model} has a unique \emph{boundary equilibrium} given by 
\begin{equation}
E_1 \bydef \left(\frac{\gamma y_1 (\kappa \lambda_1 + a_1) (1 - \lambda_1)}{m_1}, 0, \kappa \lambda_1\right), \qquad \text{ where } \qquad   \lambda_1 \bydef \frac{1}{\kappa} \left( \frac{a_1 d_1}{m_1 - d_1}\right),
\end{equation}
%
as we assume $1 - \lambda_1 > 0$ and $m_1 - d_1 > 0$.
Moreover, $E_1$ undergoes a local Hopf bifurcation in $Q_1$ when $2 \kappa \lambda_1 + a_1 - \kappa = 0$, triggering a \emph{boundary limit cycle} $\mathcal{C}_1 \subset Q_1$ for $2 \kappa \lambda_1 + a_1 - \kappa < 0$; see \cite{Sm82}.
Then, positive periodic orbits can bifurcate from $\mathcal{C}_1$ via a local transcritical bifurcation; see \cite{10.1007/BF00305755, 10.1007/BF00276918, Sm82}.
Notice that such periodic solutions are established only in a neighborhood of $Q_1$ in $\mathbb{R}^3_+$, and therefore for a small population size of $X_1$. Since \eqref{eq:original_model} remains unchanged by interchanging the index $j = 1, 2$, we can define $E_2$, $\lambda_2$, and $\mathcal{C}_2$ analogously.
Second, geometric singular perturbation theory is applicable for sufficiently large $\gamma \gg 0$ resulting in a positive periodic orbit; see \cite{10.1016/S0022-0396(02)00076-1}.
Third, perturbing a conserved quantity by considering two small difference assumptions on the parameters, $0 < (a_2 - a_1)/\kappa \ll 1$ and $0 < \lambda_2 - \lambda_1  \ll 1$, yield stable positive periodic orbits far from both boundary planes $Q_1$ and $Q_2$; see \cite{10.1137/0143066}.
Last, with only one small difference assumption, $0 < \lambda_2 - \lambda_1 \ll 1$, a local hybrid Hopf bifurcation occurs by eliminating a line of equilibria, also yielding stable positive periodic orbits far from both boundary planes; see \cite{Jia23}.
We emphasize that all existing results are either inherently local or rely on restricting the choice of parameters.

In contrast, in this article, we present a continuation method to prove a family of positive periodic orbits, which is \emph{global} in the sense that it connects two boundary limit cycles $\mathcal{C}_1$ and $\mathcal{C}_2$.
The family corresponds to a curve in the ten-dimensional parameter space.
Following \cite{HHW1978}, we parameterize such a curve by the carrying capacity $\kappa > 0$.
Moreover, the family consists of \emph{stable} periodic orbits in $\mathbb{R}^3_+$, meaning that the associated \textit{Floquet exponents} of each periodic orbit are $0$, $\mu_1$, and $\mu_2$ such that $\Re(\mu_1) < 0$ and $\Re(\mu_2) < 0$.
Our continuation method does not rely on restricting the choice of parameters, due to the nature of the computer-assisted proof developed in Sections \ref{sec:continuation}--\ref{sec:stability}.
As an application, we choose the following set of parameter values (noticing that $y_1, y_2, \gamma$ can always be rescaled to $1$)
\begin{equation}\label{parameter-set}
a_1 = 10, \quad a_2 = 41, \quad d_1 = 0.8, \quad d_2 = 0.5, \quad m_1 = m_2 = y_1 = y_2 = \gamma = 1,
\end{equation}
such that $a_2 - a_1 = 31$ and $\lambda_2 - \lambda_1 = 1/\kappa$. which yields stable periodic solutions that are not proved in the literature.
This is the content of the next theorem.
Furthermore, intricate dynamics appear to arise along the global families as $a_1$ decreases; see Section \ref{sec:future}.

\begin{theorem}[Global family of stable periodic orbits] \label{thm:main}
For the set of parameter values \eqref{parameter-set},
there exists a global family of stable positive periodic orbits, parameterized by the carrying capacity $\kappa > 0$, which connects both boundary limit cycles; this family lies within a distance $10^{-10}$ (in $C^0$-norm) from the approximation depicted in Figure \ref{fig:tube}.
Specifically, there exist $\hat{\kappa}_1, \hat{\kappa}_2 \in [92, 129]$ such that
\begin{enumerate}
\item[\textit{(i)}] for each $\kappa \in (\hat{\kappa}_1,\hat{\kappa}_2)$, the periodic orbit, denoted by $\mathcal{P}(\kappa)$, is positive and stable;
\item[\textit{(ii)}] $\mathcal{P}(\hat{\kappa}_1) = \mathcal{C}_1$, where $\mathcal{C}_1$ is the boundary limit cycle in $Q_1 \bydef \{(X_1,0,S) \, : \, X_1 > 0, \, S > 0\}$;
\item[\textit{(iii)}] $\mathcal{P}(\hat{\kappa}_2) = \mathcal{C}_2$, where $\mathcal{C}_2$ is the boundary limit cycle in $Q_2\bydef\{(0,X_2,S) \, : \, X_2 > 0, \, S > 0\}$.
\end{enumerate}
\end{theorem}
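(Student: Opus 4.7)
The plan is to reformulate the theorem as a parameter-dependent zero-finding problem and then invoke a quantitative Newton--Kantorovich argument, executed rigorously via interval arithmetic. Since the period varies along the family, I would first rescale time so that every orbit becomes $1$-periodic on $[0,1]$, and encode the orbit by its Fourier coefficients; I would then let those coefficients depend on the parameter $\kappa$ via a Chebyshev expansion over a chosen subinterval, so that an entire sub-family is represented by a single element $x$ of a weighted $\ell^1$ Banach sequence space. The zero-finding map $\mathcal{F}(x)=0$ assembles (i) the Fourier--Chebyshev form of the ODE, (ii) a $\kappa$-dependent unknown period, and (iii) a phase condition fixing the time origin on each orbit.

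Next, I would compute a numerical candidate $\bar{x}$, introduce a Newton-like operator $\mathcal{T}(x) = x - A\,\mathcal{F}(x)$ with $A$ an approximate inverse of $D\mathcal{F}(\bar{x})$ (a finite-dimensional block on the low modes and an explicit diagonal tail on the high modes), and carry out the standard radii-polynomial step: derive explicit bounds $Y_0 \ge \|A\mathcal{F}(\bar{x})\|$, $Z_0$ and $Z_1$ controlling $\|I - A A^{\dagger}\|$ and $\|A(D\mathcal{F}(\bar{x}) - A^{\dagger})\|$, and a Lipschitz bound $Z_2$ for $D\mathcal{F}$ on a ball of radius $r$ around $\bar{x}$, and then exhibit an $r>0$ satisfying $Y_0 + Z_0 r + Z_1 r + \tfrac{1}{2} Z_2 r^2 < r$. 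All four bounds are verified by the computer in interval arithmetic. This yields a true family $\mathcal{P}(\kappa)$ at $C^0$-distance at most $10^{-10}$ from $\bar{\mathcal{P}}(\kappa)$ on the rigorously controlled $\kappa$-range, and positivity follows since $\bar{\mathcal{P}}(\kappa)$ stays bounded away from $Q_1 \cup Q_2$ on the interior of that range.

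The two remaining issues are the bifurcation endpoints and stability. For the endpoints, rather than continuing the three-dimensional family all the way to $\mathcal{C}_1 \cup \mathcal{C}_2$, I would independently parameterize each planar boundary cycle $\mathcal{C}_j \subset Q_j$ by $\kappa$ using the same validated-continuation machinery (now a two-dimensional problem), and then apply an interval-based intermediate value argument to locate $\hat{\kappa}_j \in [92,129]$ at which the transverse Floquet multiplier of $\mathcal{C}_j$, associated with the normal direction $X_{3-j}$, crosses $1$; the transcritical bifurcation theorem then glues the interior family to the boundary cycle, giving (ii) and (iii). For (i), since one Floquet exponent is always $0$, it suffices to form the monodromy matrix along the validated orbit via the variational equation and enclose both nontrivial eigenvalues strictly inside the open unit disk uniformly in $\kappa$. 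The main technical obstacle I anticipate is making the radii-polynomial contraction and the Floquet enclosures survive a $\kappa$-window this long: because the orbit stiffens and changes shape substantially along the family, the proof will almost certainly require subdividing $[\hat{\kappa}_1, \hat{\kappa}_2]$ into many Chebyshev sub-patches, tracking overlap conditions between consecutive patches, and keeping the interval widths small enough for the contraction inequality to close on every patch.
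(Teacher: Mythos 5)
Your overall scaffolding (Fourier in time, Chebyshev in the parameter, a weighted $\ell^1$ Banach space, a Newton-like operator $x \mapsto x - A\mathcal{F}(x)$ with a finite block plus explicit tail, radii-polynomial bounds checked in interval arithmetic) matches the paper closely. But there is a genuine gap at the heart of your plan for the endpoints. You propose to validate the interior family on a compact $\kappa$-interval, validate the planar cycles $\mathcal{C}_j$ separately, locate $\hat{\kappa}_j$ where the transverse multiplier of $\mathcal{C}_j$ crosses $1$, and then invoke ``the transcritical bifurcation theorem'' to glue. The difficulty is that the transcritical bifurcation is precisely where the isolation of the interior branch degenerates: as the orbit approaches $Q_j$, two families of periodic orbits (the interior one and the one inside $Q_j$) collide, $D\mathcal{F}$ loses invertibility, and the contraction inequality stops closing no matter how finely you subdivide in $\kappa$. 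So the validated interior branch cannot be pushed all the way to any open neighborhood of $\hat{\kappa}_j$, while a qualitative bifurcation theorem only delivers an unquantified local family near $\hat{\kappa}_j$; you have no way to show these two pieces overlap, and the gluing step cannot be made rigorous as stated. The paper resolves this by a blow-up: it writes $x_j = \zeta_j u_j$ with the linear scaling $u_j(0,\eta)=1$ added to the unknowns, so that the amplitudes $\zeta_j$ become explicit continuation variables that may pass through zero. In the blown-up system the crossing of $Q_j$ is a regular point (the branch is locally isolated even at $\zeta_j=0$), and the entire family including the transcritical crossings is captured as a \emph{single} isolated zero of one map $F$ via one contraction, after which the sign changes of $\tilde{\zeta}_1$ and $\tilde{\zeta}_2$ are read off a posteriori.

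A second, related gap is in your stability argument: enclosing both nontrivial Floquet multipliers ``strictly inside the open unit disk uniformly in $\kappa$'' cannot succeed, because at $\hat{\kappa}_1$ and $\hat{\kappa}_2$ one multiplier equals $1$ exactly (this is the transcritical degeneracy), so any uniform strict enclosure must fail on a neighborhood of the endpoints. What the paper does instead is to solve for a Floquet normal form $(\tilde{C},\tilde{V})$ along the whole $\eta$-interval by a second contraction, verify strict stability at an interior point, then on the two small endpoint-windows only certify that \emph{one} of the nontrivial exponents stays in a fixed compact subset of the open left half-plane, and finally argue that the remaining exponent can only cross the imaginary axis through $0$, which by invertibility of $DF(\tilde{\chi})$ happens exactly at the two boundary crossings. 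If you keep your plan you will need an analogous case split and a separate argument near the endpoints rather than a uniform eigenvalue enclosure.
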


\begin{figure}
\centering
\begin{subfigure}[b]{0.48\textwidth}
\includegraphics[width=\textwidth]{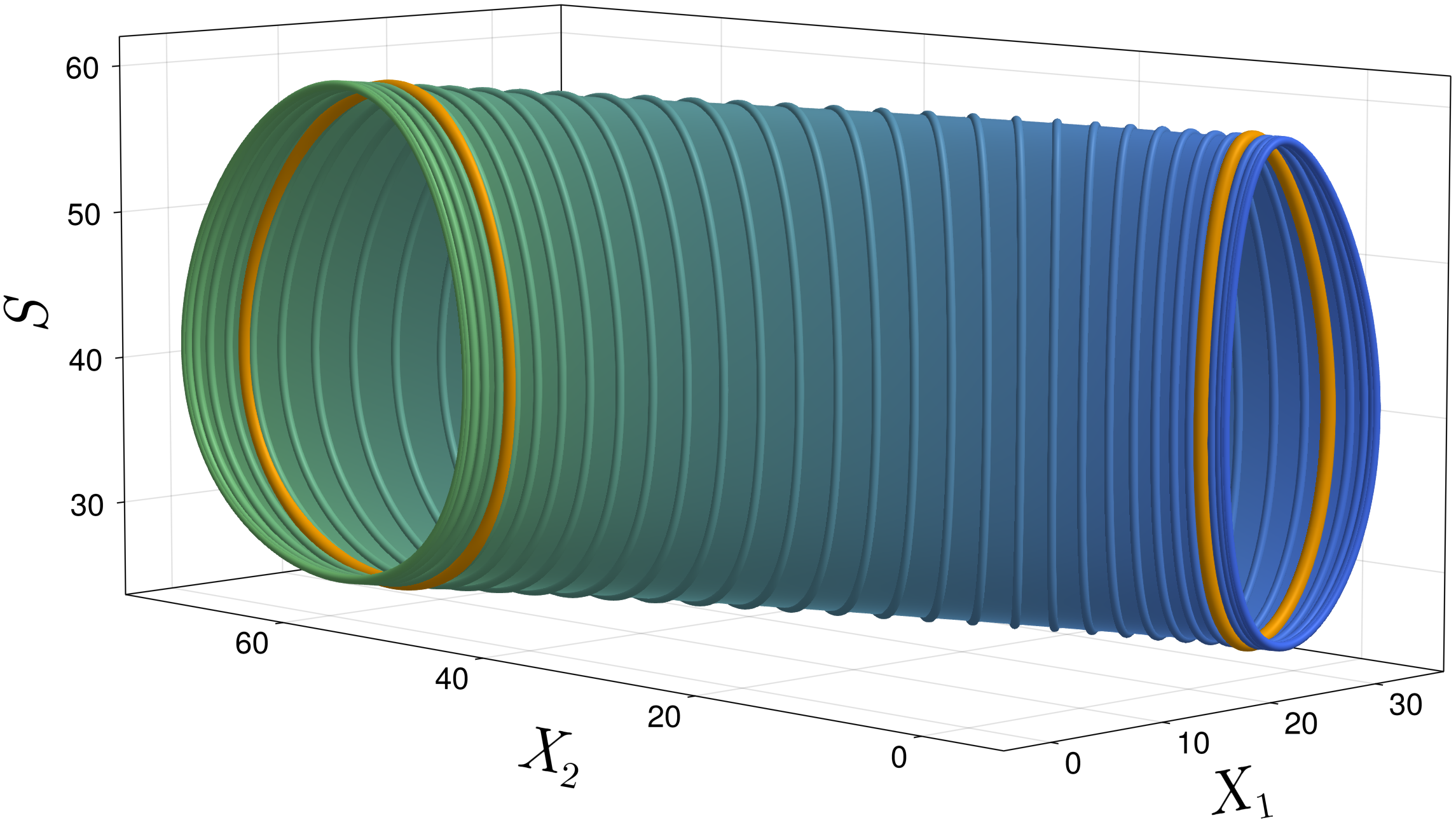}
\caption{Global family of stable positive periodic orbits for $\kappa \in [92, 129]$.
}
\end{subfigure}
\hspace{0.04\textwidth}
\begin{subfigure}[b]{0.44\textwidth}
\includegraphics[width=\textwidth]{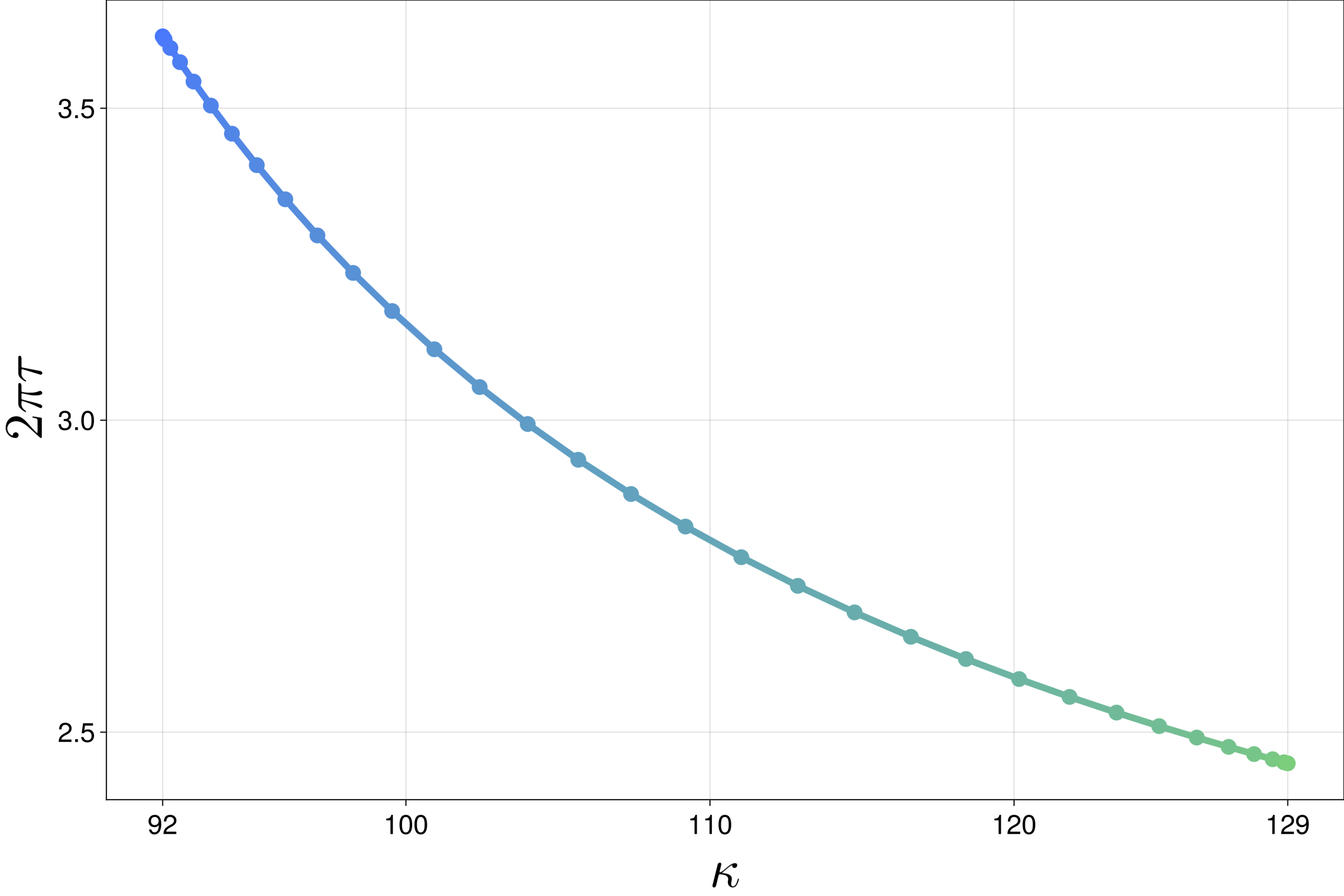}
\caption{Minimal period $2\pi\tau$.}
\end{subfigure}
\caption{The parameter values are set to \eqref{parameter-set}.
(a) Fourier--Chebyshev approximation (with $K = 20$, $N = 30$, see Section \ref{sec:contraction}) of the global family of stable positive periodic orbits to the system \eqref{eq:original_model} obtained in Theorem \ref{thm:main}. Within a distance $10^{-10}$ (in $C^0$-norm), there exists an exact family of stable periodic orbits. The true family is so close to the approximation that they are visually indistinguishable. The orange rings corresponds to the boundary limit circles $\mathcal{C}_1$ and $\mathcal{C}_2$ at $\hat{\kappa}_1 \approx 93.0545$ and $\hat{\kappa}_2 \approx 126.3145$, respectively. (b) Dependence of the minimal period on $\kappa \in [92, 129]$ along the global family. Here $\tau > 0$ is a time-rescaling parameter introduced in \eqref{eq:normalization}.}
\label{fig:tube}
\end{figure}

Theorem \ref{thm:main} is an affirmative answer to the \emph{stable connection problem} proposed by Butler and Waltman \cite[page 309]{10.1007/BF00276918}:
\begin{center}
\em
\noindent
The connection of these two [boundary] limit cycles and \\the determination of their stability were not established.
\end{center}
Numerical evidence (see \cite{HHW1978} for instance) hints at a broad parameter region that supports stable positive periodic orbits.
However, aside from largely perturbative or asymptotic results, there has not been much progress in solving the problem,{ except for \cite{10.1137/0143066}, in which the results rely on restricting the small difference assumptions on the parameters $0 < (a_2 - a_1)/\kappa \ll 1$ and $0 < \lambda_2 - \lambda_1  \ll 1$.}

We highlight the following three aspects concerning the novelty and methodology of the proof of Theorem \ref{thm:main}:
\begin{itemize}
\item We prove a global family of periodic solutions without imposing any small difference assumptions on the parameters.
Hence, we can obtain periodic orbits with large amplitude, and, moreover, our method is applicable to much broader parameter regions than those considered in the relevant literature \cite{10.1007/BF00305755, 10.1007/BF00276918, 10.1137/0143066, 10.1016/S0022-0396(02)00076-1, Jia23, Sm82}.

\item We determine the stability for all periodic orbits of a global family.
Notably, the proof of stability follows a similar strategy to the one used for the existence.
In both cases, we formulate a zero-finding problem, where we exploit the local contraction of a Newton-type fixed-point operator, centered around a high-order approximation of the solution.

\item The techniques used to prove Theorem \ref{thm:main} readily extend to other vector fields such as Holling's type III \cite{10.1007/s10884-008-9102-9}, the Beddington--DeAngelis type \cite{HsuRuanYang13}, an external inhibitor \cite{DLS}, and other functional responses \cite{AbramsHolt03, AbramsHolt02, HM, Keener85}.
\end{itemize}

We prove Theorem \ref{thm:main} using a rigorous continuation method, recasted as a single zero-finding problem $F(\chi) = 0$.
This map $F$ incorporates an auxiliary system to \eqref{eq:original_model} whose periodic orbits coincide, allowing us to continue through the local transcritical bifurcations from the boundary limit cycles.
We employ a spectral method, so that periodic solutions are expressed as Fourier series, where the dependency on the continuation parameter $\kappa > 0$ is expressed by expanding the Fourier coefficients as an infinite series of Chebyshev polynomials of the first kind.
Then, the existence of the zero of $F$, thereby of the family of periodic orbits, is proved by a local contraction argument.
The set of sufficient conditions for the appropriate fixed-point operator to be a local contraction amounts to satisfying inequalities relating norm estimates, expressed in terms of a numerical approximation of the branch of periodic orbits.
The computer is employed to perform large number needed to verify the contraction hypotheses of calculations together with interval arithmetic to prevent rounding errors.
While the strategy detailed here is aimed at addressing the connection problem, an other application was also made in celestial mechanics by one of the author of this article to resolve Marchal's conjecture in the three-body problem \cite{Marchal}.
The proof of stability is done after the existence of the family, and uses a similar contraction strategy, it also utilizes the analytic regularity of the solution curve inferred from the functional framework.

Computer-assisted proofs (abbr. CAPs) based on a posteriori validation go back to the mid-1980s with the works on the Feigenbaum conjectures \cite{feigenbaum2, feigenbaum3, feigenbaum1}.
The method followed in this article is inspired by a long history of techniques, such as \cite{MR0151678, MR0173839,DAY_ValidatedContinuation, MR2394226, MR3454370, MR0231218, MR1639986}.
A notable result of application in the field of dynamical systems, is the proof of the chaotic attractor in the Lorenz system \cite{Lorenz}; the computer was involved in both the discovery and, ultimately, the proof of its existence \cite{Tucker}.
For a more thorough account on the development of CAPs in the recent years, the interested readers are referred to the survey papers \cite{GOMEZ_PDESurvey, KOCH_ComputeAssisted, NAKAO_VerifiedPDE, VANDENBERG_Dynamics}. See also \cite{Lucia} for an introductory account on proving the existence and continuation of periodic orbits using CAPs.

The proof of Theorem \ref{thm:main} uses several new techniques in CAPs (see also Table~\ref{tab:related-work}), notably:

\begin{itemize}
\item[(1)] We use a \emph{blow-up} approach to isolate zeros; see also \cite{Marchal, 10.1007/s10884-023-10279-x, 10.1137/20M1343464}.
This allows us to desingularize the continuation as the family undergoes a transcritical bifurcation on the boundary planes $Q_1$ and $Q_2$.

\item[(2)] We employ a Chebyshev continuation procedure (see \cite{RigContPaper}) to prove the global family of periodic orbits, and auxiliary data (periods and amplitudes), as a single isolated zero of a map.
See \cite{AriGazKoc21,RigCont} for related works using high-order interpolation of solution curves.
This is in contrast to the more classical approach of using a piecewise-linear approximation of the solution curve, requiring several uniform contractions; see, e.g., \cite{Arioli2006,BouwevandenBerg2021,BreLesVan13,BerLesMis10}.

\item[(3)] Our choice of norm allows us to prove the analytic dependence of the family on the continuation parameter, and to control the derivatives with respect to the parameter.

\item[(4)] Our proof is efficient in the sense that we handle non-polynomial nonlinearities directly (in the vein of \cite{Payan}), rather than using a so-called polynomial embedding \cite{Henot2021-cm}, which results in studying vector fields in higher dimensions.
\end{itemize}

\begin{table}[h]
\centering
\caption{Non-exhaustive comparison of CAP strategies.}
\label{tab:related-work}
\renewcommand{\arraystretch}{1.1}
\begin{tabular}{p{0.19\linewidth} p{0.36\linewidth} p{0.36\linewidth}}
\toprule
\textbf{Theme} 
& \textbf{Alternative CAP strategies} 
& \textbf{Present approach} \\
\midrule

Bifurcations
& Blow-up for Hopf bifurcations \cite{10.1007/s10884-023-10279-x,10.1137/20M1343464}
& Blow-up for transcritical bifurcations of periodic orbits from invariant subsets \\
\midrule

Global continuation
& Piecewise-linear approximation of family needing multiple uniform contractions \cite{Arioli2006,BouwevandenBerg2021,BreLesVan13,BerLesMis10}, or Taylor expansion of the family \cite{AriGazKoc21}
& Chebyshev expansion of the family \\
\midrule

Nonlinearities
& Polynomial embedding \cite{Henot2021-cm}
& Direct treatment \\
\bottomrule
\end{tabular}
\end{table}

The code for the proof is implemented in Julia \cite{Julia} and is found at \cite{Code}.
The rigorous computations use the packages \texttt{RadiiPolynomial} \cite{RadiiPolynomial.jl} and \texttt{IntervalArithmetic} \cite{IntervalArithmetic.jl}.
All figures are generated using \texttt{Makie} \cite{Makie.jl}.

This article is structured as follows. { In Section \ref{sec:bioapplications}, we present a general class of ecological systems to investigate the competitive exclusion principle and highlight the effectiveness of our CAP method. In Section \ref{sec:general_framework}, we provide a general framework to desgingularize transcritical bifurcations.}
In Section \ref{sec:continuation}, we present the rigorous continuation method, which proves the existence part of Theorem \ref{thm:main}.
In Section \ref{sec:stability}, we address the stability of periodic orbits, where some details are deferred to Appendix \ref{app:contraction}.
Finally, in Section \ref{sec:future}, we outline potential future work.

{
\subsection{Further Applications: the competitive exclusion principle}\label{sec:bioapplications}

{A central paradigm in ecology is the \emph{competitive exclusion principle} \cite{gause1934, MathBioBook}, which states that different species competing for the exact same limited resource cannot stably coexist. In other words, if their ecological niches are identical, one species will eventually exclude the others. While this principle provides a useful heuristic in ecology, there is also considerable mathematical interest in rigorously establishing conditions under which it holds or fails. Notably, H.L. Smith’s work on \emph{monotone dynamical systems} \cite{smith1995monotone} characterizes settings where competitive interactions enforce exclusion, and where cooperative or more complex structures may allow coexistence. In more applied domains, explicit examples where the principle fails are considered significant contributions in their own right (see e.g., \cite{SakrefAutoExc24}), which demonstrates failure in the context of certain autocatalytic networks. One natural way to demonstrate the failure of the competitive exclusion principle is by establishing the existence of positive periodic orbits, which guarantee ongoing coexistence. In this context, our CAP framework provides a robust method to establish global continuation of such periodic orbits, beyond perturbation arguments. The framework itself opens a principled path toward identifying and rigorously verifying instances of coexistence.
\\
\indent To exemplify, we present the \textit{problem of resource competition} in Lotka--Volterra systems; see the pioneering work \cite{LeTu75}.} Suppose that there are $M$ competing predator species consuming $N$ prey resources. The problem requires to determine all pairs $(M,N) \in \mathbb{N}^2$ with $M \ge N$ such that the competitive exclusion principle fails, thus supporting biodiversity. Specifically, let $X_j(t)$ for $j = 1,\dots, M$ and $S_k(t)$ for $k = 1,\dots, N$ denote the population density of the $j$-th predator and $k$-th prey, respectively.
We consider the following general system that describes the population dynamics: 
\begin{equation}\label{eq:general_model}
\begin{cases}
\displaystyle \dot{X}_j = \left(g_j(S_1, \dots, S_N) - d_j\right) X_j, & j = 1,\dots, M, \\
\displaystyle \dot{S}_k \hspace{0.2cm} = h_k(S_k) - \sum_{\ell = 1}^M c_{k \ell} \, g_\ell(S_1, \dots, S_N) \, X_\ell, & k = 1,\dots, N. 
\end{cases}
\end{equation}
Here, for the $j$-th predator, $g_j : \mathbb{R}^N_+ \to \mathbb{R}_+$ is the conversion rate to its biomass and $d_j > 0$ is its death rate. For the $k$-th prey, $h_k : \mathbb{R}_+ \to \mathbb{R}_+$ is the growth rate when the predation is absent, and $c_{k \ell} \in (0,1)$, which satisfies $\sum_{k = 1}^N c_{k j} = 1$ for each $j = 1,\dots,M$, is the fraction of the $\ell$-th predator consuming it.

To choose $g_j(S_1, \dots, S_N)$ and $h_k(S_k)$, the literature \cite{BaLiSm06, HuWe99, HuWe02, LiSm01, LiSm03} assumes the so-called law of the minimum
\begin{equation}
g_j(S_1, \dots, S_N) = \min_{i = 1,\dots,N} \{ f_{ij}(S_i)\},
\end{equation}
where $f_{ij}(S_i)$ is of Holling's type II.
Note that the system \eqref{eq:original_model} corresponds to the special case $(M, N) = (2,1)$. The problem of resource competition is uncharted for the \textit{logistic quadratic growth}
\begin{equation} \label{eq:logistic}
h_k(S_k) =  \gamma (S_k^0 - S_k) S_k,
\end{equation}
where $\gamma > 0$ is the intrinsic rate of increase and $S_k^{0} > 0$ is the growth limiting constant.
For the \textit{chemostat affine growth}
\begin{equation} \label{eq:chemostat}
h_k(S_k) = \gamma (S_k^{0} - S_k),
\end{equation}
partial solutions are available.
For the cases $(M,N) = (2,2), (3, 2)$, the competitive exclusion principle holds; see \cite{LiSm01}.
For the case $(M,N) = (3,3)$, several positive periodic orbits exist; see \cite{BaLiSm06}.
However, it remains open whether periodic orbits exist for other cases.
Still, numerical evidence suggests that positive periodic orbits exist for the cases $(M, N) = (4,4), (5,5)$; see \cite{HuWe01}. 

The framework of our computer-assisted techniques (see Section \ref{sec:general_framework}) is largely model independent, and thus it paves a way to solve the problem of resource competition, for both growth rates \eqref{eq:logistic}--\eqref{eq:chemostat}. Indeed, for each fixed pair $(M,N)$ whose numerical evidence exhibits periodicity, we can employ our techniques not only to prove the existence of positive periodic orbits and thus solve the problem, but also their stability property and continuations.
}



\subsection{General framework to desingularize transcritical bifurcations} \label{sec:general_framework}

The stable connection problem involves a continuation of positive periodic orbits through both boundary planes $Q_1$ and $Q_2$, where a transcritical bifurcation occurs.
Since the validation of the continuation hinges on the local isolation of the solution, we establish a \emph{blow-up method} that yields an auxiliary system of equations for which the undesired branch of periodic solutions is removed.
In this section, we present this approach and explain why this strategy achieves such local isolation in the most general setting.

Consider the general system
\begin{equation}\label{eq:general_odes}
\partial_t x = f(x; \mu),
\end{equation}
where $f = (f_1, \dots, f_n)$ is analytic, $x = (x_1, \dots, x_n) \in \mathcal{X} \subset \mathbb{R}^n$, and $\mu \in U \subset \mathbb{R}$ is a parameter.
We are interested in the following situation:
\begin{enumerate}
\item[{\bf (A1)}] There exists an invariant subset $Q \bydef \{x \in \mathcal{X} \, : \, G(x; \mu) = 0\}$ of $\mathbb{R}^n$, where $G : \mathcal{X} \times U \to \mathbb{R}$ is analytic such that $\partial_{x_j} G(x; \mu) \ne 0$ for some $ j = 1, \dots, n$ and all $(x; \mu) \in \mathcal{X} \times U$, i.e., $\partial_{x_j} G(x; \mu)$ is invertible.
Without loss of generality, we assume $j = 1$.
\item[{\bf (A2)}] There exist two families $\mathcal{P}_1(\mu)$ and $\mathcal{P}_2(\mu)$ of periodic orbits parametrized by $\mu \in U$ such that $\mathcal{P}_1(\mu) \subset Q$ for all $\mu \in U$ and $\mathcal{P}_2(\mu) \cap Q = \emptyset$ for $\mu \ne \mu_{\mathrm{bif}} \in U$. Moreover, $\mathcal{P}_1(\mu_\mathrm{bif}) = \mathcal{P}_2(\mu_\mathrm{bif})$, i.e., the two families meet on the invariant subset $Q$ at $\mu = \mu_{\mathrm{bif}}$.
\end{enumerate}

Let $x = x(t; \mu)$ be a solution of~\eqref{eq:general_odes}.
We introduce the \emph{blow-up coordinates} $\zeta : U \to \mathbb{R}$ and $u : \mathbb{R} \times U \to \mathbb{R}$ such that
\begin{equation}\label{eq:blow_up_coord}
G(x (t; \mu); \mu) = \zeta(\mu) u(t; \mu).
\end{equation}
By {\bf (A1)}, the implicit function theorem implies that $x_1 = \omega(\zeta(\mu) u, x_2 , \dots, x_n ; \mu)$ for some smooth function $\omega$.
Differentiating~\eqref{eq:blow_up_coord} with respect to $t$ yields
\begin{equation}\label{eq:relation}
\zeta(\mu) \partial_t u(t; \mu) 
= \nabla G(z(t; \mu); \mu) f(z(t; \mu); \mu), \qquad z \bydef (\omega (\zeta(\mu) u, x_2 , \dots, x_n ; \mu) , x_2 , \dots, x_n ),
\end{equation}
where $\nabla$ is the gradient with respect to the variables $x = (x_1, \dots, x_n)$.
Also, if $t \mapsto x(t; \mu)$ represents the periodic solution to~\eqref{eq:general_odes} associated to $\mathcal{P}_2$, then by {\bf (A2)}, we have $\mathcal{P}_2(\mu_\mathrm{bif}) \subset Q$ and so $\zeta(\mu_\mathrm{bif}) = 0$.
From the invariance of $Q = \{ x \in \mathcal{X} \, : \, G(x; \mu) = 0\}$, we get
\begin{equation}
\nabla G(x; \mu_\mathrm{bif}) f(\omega(0, x_2 , \dots, x_n ; \mu_\mathrm{bif}), x_2 , \dots, x_n ; \mu_\mathrm{bif}) = 0.
\end{equation}
This guarantees that $\lim_{\mu \to 0} \partial_t u(t; \mu)$ exists, as seen by expanding the right-hand side of~\eqref{eq:relation} near $x_1 = 0$, such that
\begin{align} 
\begin{split}
\label{ratio-relation}
\partial_t u(t; \mu) & = \frac{1}{\zeta(\mu)} \nabla G( z(t ;\mu); \mu ) f(z(t;\mu) ;\mu)
\\
& = \sum_{\ell = 1}^\infty c_\ell (x_2 (t;\mu), \dots, x_n (t;\mu); \mu) \zeta(\mu)^{\ell-1} u(t;\mu)^{\ell},
\end{split}
\end{align}
where
\begin{equation}
c_\ell (x_2 , \dots, x_n ; \mu) \bydef \left.\partial_{x_1}^\ell \right|_{x_1 = 0} \nabla G( z; \mu ) f(z ;\mu), \qquad \ell \ge 1.
\end{equation}
This yields the \emph{auxiliary system}
\begin{equation}\label{eq:auxiliary_blowup}
\begin{cases}
\displaystyle\partial_t \begin{pmatrix}
u \\ x_2  \\ \vdots \\ x_n 
\end{pmatrix} =
\begin{pmatrix}
\frac{1}{\zeta(\mu)} \nabla G( \omega, x_2 , \dots, x_n; \mu ) f(\omega, x_2 , \dots, x_n ;\mu)
\\
f_2 ( \omega, x_2 , \dots, x_n ; \mu) \\
\vdots \\
f_n ( \omega, x_2 , \dots, x_n ; \mu)
\end{pmatrix}, \\
G(\omega, x_2 , \dots, x_n ; \mu) = \zeta(\mu) u, \\
u(0; \mu) = 1.
\end{cases}
\end{equation}
We have that if $(u,x_2,\dots,x_n)$ is a periodic solution, for some $\zeta$, to the auxiliary system~\eqref{eq:auxiliary_blowup}, then $(\zeta u, x_2, \dots, x_n)$ is a periodic solution to the original system~\eqref{eq:general_odes}.
Provided that $\zeta \ne 0$, the periodic solution does not lie in $Q$.
Importantly, periodic orbits are locally isolated whenever $\zeta = 0$.
For completeness, we state this in the following lemma. However, we stress that verifying its hypotheses is unnecessary for our proof; it serves to justify why using the coordinates \eqref{eq:blow_up_coord} and studying the auxiliary system \eqref{eq:auxiliary_blowup} is compatible with a contraction argument (detailed in Section~\ref{sec:contraction}) to continue a branch of periodic solutions through a non-degenerate transcritical bifurcation.

\begin{proposition}[Local isolation of periodic orbits] \label{prop-blow-up}
Under the assumptions {\bf (A1)} and {\bf (A2)}, the family of periodic orbits $\mathcal{P}_2(\mu)$, seen in the coordinates \eqref{eq:blow_up_coord} and solving \eqref{eq:auxiliary_blowup}, is locally isolated near $\mu =0$.
\end{proposition}

\begin{proof}
Let $x_*(t; \mu)$ denote the periodic solution associated with $\mathcal{P}_2(\mu)$; we need to prove local uniqueness at $\mu=\mu_\mathrm{bif}$.
Without loss of generality, we assume $\mu_\mathrm{bif} = 0$.

At $\mu = 0$, since $Q = \{x \in \mathcal{X} \, : \, G(x; \mu) = 0\}$ is an invariant subset by {\bf (A1)}, each $x_j$-equation of \eqref{eq:auxiliary_blowup}, for $j = 2,\dots, n$, is decoupled from the $u$-equation.
From {\bf (A2)}, we know that $x_*(t; 0)$ is contained in $Q$ and solves the decoupled system (i.e., without the $u$-coordinate).
Moreover, due to this decoupling, the first equation becomes a linear non-autonomous ODE with periodic coefficients
\[
\partial_t u(t;0) = \phi(t) u(t;0).
\]
The general solution reads $u (t; 0) = C e^{\int_0^t \phi(s) \, \mathrm{d}s} > 0$, where $C = 1/e$ is uniquely determined by the scaling $u(0;0)=1$.
\end{proof}

In the specific predator-prey system \eqref{eq:general_model}, $\mathcal{X}$ is the closure of $\mathbb{R}_+^3$, $\mu \in U = (0, \infty)$ is the carrying capacity $\kappa$, $Q = \{x \in \mathcal{X} \, : \, x_1 = 0\}$ (resp., $Q = \{x \in \mathcal{X} \, : \, x_2 = 0\}$) is the boundary plane $Q_2$ (resp., $Q_1$), $\mathcal{P}_1(\mu)$ is the family of boundary limit cycles, and $\mathcal{P}_2(\mu)$ is the continuation of positive periodic orbits.
Also, the coordinates \eqref{eq:blow_up_coord} reduce to $X_1 = a_1 u_1$ and $X_2 = a_2 u_2$ to handle, respectively, the two boundary planes $Q_1$ and $Q_2$. Moreover, $\omega$ is known explicitly to be $\omega = au$.



\section{Proof of analytic families of periodic orbits}
\label{sec:continuation}

We normalize the system \eqref{eq:original_model} as in \cite{10.1137/0143066}:
\begin{equation}\label{eq:normalization}
\begin{aligned}
&x_j(t) \bydef \frac{1}{\kappa} \frac{m_j}{\gamma y_j} X_j \left(\frac{1}{\gamma} \tau t\right), \qquad
s(t) \bydef \frac{1}{\kappa} S\left(\frac{1}{\gamma} \tau t\right), \\
&\alpha_j \bydef \frac{1}{\kappa} a_j, \qquad
\lambda_j \bydef \frac{1}{\kappa} \left(\frac{a_j d_j}{m_j - d_j}\right), \qquad
\delta_j \bydef \frac{m_j - d_j}{\gamma},
\end{aligned}
\end{equation}
where $\tau > 0$ is a parameter introduced to scale the period to $2\pi$, and which will be determined later.
Then, we obtain the following \emph{rescaled system}:
\begin{equation}\label{eq:model}
\begin{cases}
\displaystyle \dot{x}_j = \tau \delta_j \left(\frac{s - \lambda_j}{s + \alpha_j}\right) x_j, & j = 1, 2,\\
\displaystyle \dot{s} \hspace{0.2cm} = \tau \left(1 - s - \frac{x_1}{s + \alpha_1} - \frac{x_2}{s + \alpha_2}\right) s.
\end{cases}
\end{equation}
Our objective is to find a one-parameter family of positive periodic solutions to \eqref{eq:model} connecting the boundary planes $Q_1 = \{(x_1, 0, s) \, : \, x_1 > 0, \, s > 0\}$ and $Q_2 = \{(0, x_2, s) \, : \, x_2 > 0, \, s>0\}$ by increasing the parameter $\kappa$ from $\kappa_1$ to $\kappa_2$.
Although the parameter $\kappa$ no longer appears explicitly in \eqref{eq:model}, its variation is captured by the parameters $\alpha_j$ and $\lambda_j$.
The values $\kappa_1$ and $\kappa_2$ will be deduced from the numerics, and we will verify a posteriori that indeed the family crosses $Q_1$ and $Q_2$.
Namely, the branch of periodic orbits at $\kappa_1$ and $\kappa_2$ will reside outside $\mathbb{R}_+^3$, and thus we introduce the notation $\hat{\kappa}_1$ and $\hat{\kappa}_2$, as reported in Theorem \ref{thm:main}, to emphasize the parameter values for which a local transcritical bifurcation occurs at $Q_1$ and $Q_2$, respectively.

Since only the inverse of $\kappa$ appears in \eqref{eq:normalization}, given $0 < \kappa_1 \le \kappa_2$, we parameterize $\kappa$ (and, more directly, $\alpha_j$ and $\lambda_j$) by
\begin{equation}\label{eq:kappa}
\eta \in [-1, 1] \quad \mapsto \quad \kappa(\eta) \bydef \frac{2 \kappa_1 \kappa_2}{\kappa_1 + \kappa_2 + (\kappa_1 - \kappa_2)\eta}
\end{equation}
such that $\kappa(-1) = \kappa_1$ and $\kappa(1) = \kappa_2$.
The variable $\eta$ now plays the role of the continuation parameter, and we write
\begin{equation}\label{eq:variables}
\alpha_j (\eta) = \frac{1}{\kappa(\eta)} a_j, \qquad
\lambda_j (\eta) = \frac{1}{\kappa(\eta)} \left( \frac{a_j d_j}{m_j - d_j} \right),
\end{equation}
to emphasize their dependency. 

{
The roadmap for proving the existence of the family is the following:
\begin{itemize}
\item Formulation: we set up a zero-finding problem whose zeros are in one-to-one correspondence with families of periodic solutions to the rescaled system \eqref{eq:model}.
\item Analysis: we derive a set of sufficient conditions for a Newton type fixed-point operator to be a contraction near a high-order numerical approximation of the family.
\item Computations: we perform the desired computations to verify the contraction conditions; see the code available in~\cite{Code}.
The code incorporates some relevant checks discussed in Section~\ref{sec:aposteriori} to ensure that the proven family is real-valued and indeed crosses the boundary planes $Q_1$ and $Q_2$.
It also contains the proof of the stability, which is based on a contraction argument as well, though for a different set of equations to solve.
\end{itemize}
}

\subsection{Zero-finding problem}

The parameter continuation of positive periodic orbits encounters a singularity at the boundary planes $Q_1$ and $Q_2$.
In each plane, there is a periodic orbit that persists for a broad range of parameters, and as we continue along the parameter curve, the two distinct families of periodic orbits -- one in $\mathbb{R}_+^3$ and the other in $Q_j$ -- intersect.
Such an intersection obstructs the continuation as it violates the implicit function theorem.
While it is in principle possible that more degenerate cases occur, we only consider the standard scenario where the interior family connects to a boundary limit cycle in $Q_1$ and $Q_2$ via the transcritical bifurcation studied in \cite{10.1007/BF00305755, 10.1007/BF00276918, Sm82}.

{Therefore, this falls in the scope of Section \ref{sec:general_framework}, and} we consider $u = (u_1, u_2, u_3)$ with
\begin{equation}
\begin{cases}
x_j (t, \eta)  = \zeta_j (\eta) u_j (t, \eta), & j = 1, 2, \\
s(t, \eta) \hspace{0.2cm} = u_3(t, \eta).
\end{cases}
\end{equation}
This leads to the following auxiliary system for the continuation parameter $\eta \in [-1, 1]$:
\begin{equation}\label{eq:model_blowup}
\begin{cases}
\displaystyle \partial_t u_j = \tau \delta_j \left(\frac{u_3 - \lambda_j(\eta)}{u_3 + \alpha_j(\eta)}\right) u_j, & j = 1, 2,\\
\displaystyle \partial_t u_3 = \tau \left(1 - u_3 - \zeta_1 \frac{u_1}{u_3 + \alpha_1(\eta)} - \zeta_2 \frac{u_2}{u_3 + \alpha_2(\eta)}\right) u_3, \\
u_j (0, \eta) = 1, & j =1, 2.
\end{cases}
\end{equation}
Then, a family of periodic orbits to the auxiliary system \eqref{eq:model_blowup}, parameterized by $\eta \in [-1, 1]$, is a zero of the map $F$ defined (at this stage only formally), for $\chi = (\tau, \zeta_1, \zeta_2, u)$, by
\begin{equation}\label{eq:zero_finding_problem}
F(\chi) \bydef
\begin{pmatrix}
\rho(u) \\
\partial_t u - \tau f(\zeta_1, \zeta_2, u)
\end{pmatrix},
\end{equation}
where, $u = (u_1, u_2, u_3)$,
\begin{align}
\rho(u) &\bydef
\begin{pmatrix}
\displaystyle\int_0^{2\pi} \langle u(t, \,\cdot), \partial_t \Gamma(t, \,\cdot) \rangle \, \mathrm{d}t \label{eq:rho}\\
\ell(u_1) - 1 \\
\ell(u_2) - 1
\end{pmatrix}, \\
f(\zeta_1, \zeta_2, u) &\bydef
\begin{pmatrix}
\displaystyle\delta_1 \left(\frac{u_3 - \lambda_1}{u_3 + \alpha_1}\right) u_1 \\
\displaystyle\delta_2 \left(\frac{u_3 - \lambda_2}{u_3 + \alpha_2}\right) u_2 \\
\displaystyle\left(1 - u_3 - \zeta_1 \frac{u_1}{u_3 + \alpha_1} - \zeta_2 \frac{u_2}{u_3 + \alpha_2}\right) u_3
\end{pmatrix},
\end{align}
{for some map $\ell$ mimicking the normalization constraints $u_1(0, \eta) = u_2(0, \eta) = 0$, and given explicitly at the end of this section.}
We emphasize again that $\alpha_j = \alpha_j (\eta), \lambda_j = \lambda_j (\eta)$ are parameterized by $\eta$ as described in \eqref{eq:variables}.
The constraint
\begin{equation}\label{eq:phase_condition}
\int_0^{2\pi} \langle u(t, \eta), \partial_t \Gamma(t, \eta) \rangle \, \mathrm{d}t = 0,
\end{equation}
where $\langle \cdot, \cdot \rangle$ denotes the standard inner product on $\mathbb{C}^3$,
fixes the phase of the periodic solutions to remove their time-translation invariance. 
Here, $\Gamma$ denotes some periodic function near the one we seek; its time derivative $\partial_t \Gamma$ may simply be estimated numerically {and described by a finite number of Fourier--Chebyshev coefficients as specified at the end of this section}.

We now detail the functional analytic framework used to define $F$.
The approach is spectral.
We denote the $n$-th \emph{Chebyshev coefficient} of a function $\psi \in C^\infty([-1,1],\mathbb{C})$, namely
\begin{equation}
\psi_n = \frac{1}{2\pi} \int_{-1}^1 \frac{\psi (\eta) T_n (\eta)}{\sqrt{1 - \eta^2}} \, \mathrm{d}\eta, \qquad n \in \mathbb{N}_0 \bydef \mathbb{N} \cup \{0\}.
\end{equation}
Here $T_n : [-1, 1] \mapsto [-1, 1]$ represents the $n$-th Chebyshev polynomial of the first kind, satisfying the identity
\begin{equation}\label{eq:cos_cheb}
T_n(\cos(\theta)) = \cos(n\theta).
\end{equation}

The $k$-th \emph{Fourier coefficient} of a function $\phi \in C^\infty (\mathbb{R}/2\pi\mathbb{Z} \times [-1,1], \mathbb{C})$ is denoted by a subscript $k$ as follows:
\begin{equation}
\phi_k (\eta) = \frac{1}{2\pi} \int_0^{2\pi} \phi (t, \eta) e^{i k t} \, \mathrm{d}t, \qquad \eta \in [-1, 1], \quad k \in \mathbb{Z},
\end{equation}
so that $\phi_k \in C^\infty ([-1, 1], \mathbb{C})$ for all $k \in \mathbb{Z}$.
The \emph{Fourier--Chebyshev coefficients} are denoted by the double subscript $n,k$ as follows
\begin{equation}
\phi_{n,k} = \frac{1}{4\pi^2} \int_0^{2\pi} \int_{-1}^1 \frac{\phi (t, \eta) e^{i k t} T_n (\eta)}{\sqrt{1 - \eta^2}} \, \mathrm{d}\eta \, \mathrm{d}t, \qquad n \in \mathbb{N}_0, \quad k \in \mathbb{Z}.
\end{equation}
For $\nu \ge 1$, we introduce
\begin{align}
\mathcal{P}_\nu &\bydef \left\{ \psi \in C^\infty ([-1, 1], \mathbb{C}) \, : \, \| \psi \|_{\mathcal{P}_\nu} \bydef \sum_{n \in \mathbb{Z}} |\psi_{|n|}| \nu^{|n|} < \infty \right\}, \\
\mathcal{W}_\nu &\bydef \left\{ \phi \in C^\infty (\mathbb{R}/2\pi\mathbb{Z} \times [-1, 1], \mathbb{C}) \, : \, \| \phi \|_\nu \bydef \sum_{k \in \mathbb{Z}} \|\phi_k\|_{\mathcal{P}_\nu} = \sum_{n, k \in \mathbb{Z}} |\phi_{|n|,k}| \nu^{|n|} < \infty \right\}.
\end{align}
The choice of $\nu \ge 1$ is related to the regularity of the function $\psi$;
{ see \cite{Tre13} for an extensive discussion.
For our purposes, we note that, whenever} $\nu > 1$, term-by-term differentiation of the series is well defined up to any order, which will play an important role to verify that our family of periodic orbits crosses each of the boundary planes $Q_1$ and $Q_2$ exactly once; see Section \ref{sec:aposteriori}.
Unlike Taylor series, even non-analytic functions may be represented by Chebyshev series, and in the analytic case, we always have convergence of the series in a Bernstein ellipse.

{
Furthermore, $\mathcal{P}_\nu$ forms a Banach algebra with respect to the multiplication of functions, specifically
\begin{equation}\label{eq:banach_algebra1}
\| \phi \psi \|_{\mathcal{P}_\nu} \le \| \phi\|_{\mathcal{P}_\nu} \| \psi\|_{\mathcal{P}_\nu}, \qquad \text{for all } \phi, \psi \in \mathcal{P}_\nu,
\end{equation}
where, for all $\phi, \psi \in \mathcal{P}_\nu$, we have
\begin{equation}\label{eq:conv1}
( \phi \psi )_n = \sum_{n' \in \mathbb{Z}} \phi_{|n-n'|} \psi_{|n'|}, \qquad n \in \mathbb{N}_0.
\end{equation}
Similarly, $\mathcal{W}_\nu$ forms a Banach algebra
\begin{equation}\label{eq:banach_algebra2}
\| \phi \psi \|_\nu \le \| \phi\|_\nu \| \psi\|_\nu, \qquad \text{for all } \phi, \psi \in \mathcal{W}_\nu,
\end{equation}
where, for all $\phi, \psi \in \mathcal{W}_\nu$, we have
\begin{equation}\label{eq:conv2}
( \phi \psi )_{n, k} = \sum_{n', k' \in \mathbb{Z}} \phi_{|n-n'|, k - k'} \psi_{|n'|, k'}, \qquad n \in \mathbb{N}_0, \quad k \in \mathbb{Z}.
\end{equation}
}

Consider the Banach space
\begin{equation}
\mathcal{X}_\nu \bydef \mathcal{P}_\nu \times \mathcal{P}_\nu \times \mathcal{P}_\nu \times \mathcal{U}_\nu,
\end{equation}
endowed with the norm
\begin{equation}
\| \chi \|_{\mathcal{X}_\nu} \bydef \| \tau \|_{\mathcal{P}_\nu} + \| \zeta_1 \|_{\mathcal{P}_\nu} + \| \zeta_2 \|_{\mathcal{P}_\nu} + \| u \|_{\mathcal{U}_\nu}, \qquad \text{for all } \chi = (\tau, \zeta_1, \zeta_2, u) \in \mathcal{X}_\nu,
\end{equation}
where 
\begin{equation}
\mathcal{U}_\nu \bydef \left\{ u = (u_1, u_2, u_3) \in \mathcal{W}_\nu^3 \, : \, \| u \|_{\mathcal{U}_\nu} \bydef \sum_{j=1}^3 \| u_j \|_\nu < \infty \right\}.
\end{equation}
Then, we have $F : \mathcal{D}(F) \subset \mathcal{X}_\nu \to \mathcal{X}_\nu$ with
\begin{equation}\label{eq:domain}
\mathcal{D}(F) = \left\{ \chi = (\tau, \zeta_1, \zeta_2, u) \in \mathcal{X}_\nu  \, : \, \|(u_3 + \alpha_1)^{-1}\|_\nu,\, \|(u_3 + \alpha_2)^{-1}\|_\nu,\, \|\partial_t u\|_{\mathcal{U}_\nu} < \infty \right\}.
\end{equation}

Our strategy is to show that a fixed-point operator (yet to be constructed) is a contraction around a finite Fourier--Chebyshev series that approximates the family of periodic orbits.
This approximation lives in a finite-dimensional subspace of $\mathcal{X}_\nu$.
To obtain this space, we introduce the \emph{truncation operators} $\Pi_K, \Pi_{N,K} : C^\infty(\mathbb{R}/2\pi\mathbb{Z} \times [-1, 1], \mathbb{C}) \to C^\infty(\mathbb{R}/2\pi\mathbb{Z} \times [-1, 1], \mathbb{C})$ by
\begin{equation} \label{index-K}
(\Pi_K u)_k
\bydef
\begin{cases}
u_k, & |k| \le K,  \\
0, & |k| > K,
\end{cases}, \qquad
(\Pi_{N,K} u)_k
\bydef
\begin{cases}
(\hat{\Pi}_N u_k)_n, & |k| \le K,  \\
0, & |k| > K,
\end{cases}
\end{equation}
with $\hat{\Pi}_N : C^\infty ([-1, 1], \mathbb{C}) \to C^\infty ([-1, 1], \mathbb{C})$ given by
\begin{equation} \label{index-N}
(\hat{\Pi}_N u)_n
\bydef
\begin{cases}
u_n, & n \le N,  \\
0, & n > N.
\end{cases}
\end{equation}
Moreover, we consider the \emph{tail operator} 
\begin{equation}
\Pi_{>K} \bydef I - \Pi_K.
\end{equation}
Note that both truncation operators $\Pi_K$ and $\Pi_{N, K}$ naturally extend to $\chi = (\tau, \zeta_1, \zeta_2, u) \in \mathcal{X}_\nu$ by acting component-wise
\begin{equation}
\Pi_K \chi = (\tau, \zeta_1, \zeta_2, \Pi_K u), \qquad
\Pi_{N, K} \chi = (\hat{\Pi}_N \tau, \hat{\Pi}_N \zeta_1, \hat{\Pi}_N \zeta_2, \Pi_{N, K} u).
\end{equation}

Fix $N, K \in \mathbb{N}$.
{
We consider
\begin{equation}
\partial_t \Gamma \in \Pi_{N, K} \mathcal{U}_\nu, \qquad
\ell(\psi) = [\Pi_{N, K} \psi](0, \,\cdot), \quad \text{for all }\psi \in \mathcal{W}_\nu.
\end{equation}
With this choice of $\partial_t \Gamma$ and $\ell$, the map $\rho$ given in\eqref{eq:rho} only depends on a finite number of Fourier--Chebyshev coefficients.
}

\subsection{Newton--Kantorovich type theorem}
\label{sec:contraction}

The next theorem provides sufficient conditions for $T$ to be a contraction which, by construction of $A$, is enough to obtain a zero of $F$.
{ The proof is standard and can be found, for instance, in~\cite{Bre25}.}

{
\begin{theorem}\label{thm:nk}
Let $R > 0$, $\bar{\chi} \in \Pi_{N,K} \mathcal{X}_\nu$, and $A : \mathcal{X}_\nu \to \mathcal{X}_\nu$ be an injective linear map.
Suppose there exist positive constants $Y, Z_1, Z_2 = Z_2(R)$ satisfying
\begin{subequations}\label{eq:bounds}
\begin{align}
\|AF(\bar{\chi}) \|_{\mathcal{X}_\nu} &\le Y, \\
\|ADF(\bar{\chi}) - I \|_{\mathscr{B}(\mathcal{X}_\nu, \mathcal{X}_\nu)} &\le Z_1, \\
\sup_{\chi \in B_R(\bar{\chi})}\|AD^2F(\chi) \|_{\mathscr{B}(\mathcal{X}_\nu, \mathscr{B}(\mathcal{X}_\nu, \mathcal{X}_\nu))} &\le Z_2,
\end{align}
\end{subequations}
where $B_R(\bar{\chi})$ is the closed ball in $\mathcal{X}_\nu$ centered at $\bar{\chi}$ with radius $R$, and $\mathscr{B}(\mathcal{X}, \mathcal{Y})$ denotes the set of bounded linear operators from $\mathcal{X}$ to $\mathcal{Y}$.

If
\begin{subequations}\label{eq:contraction_cond}
\begin{align}
2YZ_2 &\le (1 - Z_1)^2, \\
Z_1 &< 1,
\end{align}
\end{subequations}
then, for any $r > 0$ such that
\begin{equation}
\frac{1-Z_1 - \sqrt{(1-Z_1)^2 - 2YZ_2}}{Z_2} \le r < \min\left(\frac{1-Z_1}{Z_2}, R\right),
\end{equation}
there exists a unique zero $\tilde{\chi} \in B_r(\bar{\chi})$ of $F$.
\end{theorem}
}

Before verifying the contraction criteria given in the above theorem, we must first have an approximation of the branch of periodic orbits in $\Pi_{N, K} \mathcal{X}_\nu$, denoted by $\bar{\chi} = (\bar{\tau}, \bar{\zeta}_1, \bar{\zeta}_2, \bar{u})$.
This can be achieved by interpolating each Fourier mode of the approximate zeros $\bar{\chi}_0, \dots, \bar{\chi}_N$ of $F|_{\eta_\ell}$, where $F|_{\eta_\ell}$ is the mapping defined in~\eqref{eq:zero_finding_problem} and evaluated at $\eta = \eta_\ell$.
A practical way is to consider a finite truncation of $F|_{\eta_\ell}$ and use Newton's method to produce an accurate $\bar{\chi}_\ell$ at each of the $N + 1$ Chebyshev nodes
\begin{equation}
\eta_\ell \bydef -\cos\left(\frac{\ell\pi}{N}\right), \qquad \ell = 0, \dots, N.
\end{equation}
Then, the corresponding Fourier coefficients can be recovered by applying the inverse discrete Fourier transform (recall identity~\eqref{eq:cos_cheb}).
In practice, this step can be carried out efficiently by means of fast Fourier transform algorithms.

Moreover, we need to construct the operator $A \in \mathscr{B}(\mathcal{X}_\nu, \mathcal{X}_\nu)$.
We set
\begin{equation}
A \bydef A_\textnormal{finite} \Pi_K + A_\textnormal{tail} \Pi_{>K},
\end{equation}
with $A_\textnormal{tail} : \Pi_{>K} \mathcal{X}_\nu \to \Pi_{>K} \mathcal{X}_\nu$ defined, for all $\chi = (\tau, \zeta_1, \zeta_2, u) \in \mathcal{X}_\nu$, $u = (u_1, u_2, u_3)$, by
\begin{equation}\label{eq:tail}
A_\textnormal{tail} \chi \bydef (0, 0, 0, u'), \qquad
(u_j')_k \bydef
\begin{cases}
0, & |k| \le K, \\
\displaystyle \frac{1}{ik} (u_j)_k, & |k| > K,
\end{cases} \quad j = 1, 2, 3.
\end{equation}
On the other hand, $A_\textnormal{finite} \in \mathscr{B}(\Pi_K \mathcal{X}_\nu, \Pi_K \mathcal{X}_\nu)$ is an approximation of the inverse $(\Pi_K DF(\bar{\chi}) \Pi_K)^{-1}$ constructed by interpolating each entries of the $N+1$ matrices $(\Pi_{0,K} DF|_{\eta_\ell}(\bar{\chi}) \Pi_{0,K})^{-1}$ with a Chebyshev polynomial.

In the next sections, we derive practical estimates to verify the assumptions of Theorem~\ref{thm:nk}.
Specifically, we obtain explicit bounds for each norm in \eqref{eq:bounds}, expressed in terms of the numerical approximation $\bar{\chi}$ and so that only a finite, potentially substantial, set of computer calculations is needed.

\subsubsection{Estimate for $\|A F (\bar{\chi})\|_{\mathcal{X}_\nu}$}
\label{sec:Ybound}

To control $\|A F (\bar{\chi})\|_{\mathcal{X}_\nu}$, we begin by defining a finite-dimensional approximation $W_0 \in \Pi_{2N, K} \mathcal{X}_\nu$ of $F (\bar{\chi})$.
Let
\begin{equation}
W_0 \bydef
\begin{pmatrix}
\rho(\bar{u}) \\
\partial_t \bar{u} - \omega_0
\end{pmatrix},
\end{equation}
with $\omega_0 \in \Pi_{N, K} \mathcal{U}_\nu$ being an approximation of $\bar{\tau} f(\bar{\zeta}_1, \bar{\zeta}_2, \bar{u})$ found numerically.
The triangle inequality yields
\begin{equation}
\|A F (\bar{\chi})\|_{\mathcal{X}_\nu}
\le \|A W_0\|_{\mathcal{X}_\nu} +  \|A\|_{\mathscr{B}(\mathcal{X}_\nu, \mathcal{X}_\nu)} \|F (\bar{\chi}) - W_0\|_{\mathcal{X}_\nu},
\end{equation}
where
\begin{equation}\label{eq:W0}
\|F (\bar{\chi}) - W_0\|_{\mathcal{X}_\nu}
= \|\bar{\tau} f(\bar{\zeta}_1, \bar{\zeta}_2, \bar{u}) - \omega_0\|_{\mathcal{U}_\nu},
\end{equation}
and
\begin{equation}
\|A\|_{\mathscr{B}(\mathcal{X}_\nu, \mathcal{X}_\nu)} = \max\left( \|A_\textnormal{finite}\|_{\mathscr{B}(\mathcal{X}_\nu, \mathcal{X}_\nu)}, \frac{1}{K + 1}\right).
\end{equation}

While the products of functions are given by the discrete convolution formulas \eqref{eq:conv1} and \eqref{eq:conv2}, to bound $\|\bar{\tau} f(\bar{\zeta}_1, \bar{\zeta}_2, \bar{u}) - \omega_0\|_{\mathcal{U}_\nu}$ appearing in \eqref{eq:W0} it remains to handle the division of functions.

\begin{lemma}
Let $\bar{\phi}, \bar{\phi}_\textnormal{inv} \in \Pi_{N, K} \mathcal{W}_\nu$.
If $\|1 - \bar{\phi} \bar{\phi}_\textnormal{inv}\|_\nu < 1$, then
\begin{equation}
\| \bar{\phi}_\textnormal{inv} - \bar{\phi}^{-1} \|_\nu \le \frac{\|\bar{\phi}_\textnormal{inv} (1 - \bar{\phi} \bar{\phi}_\textnormal{inv})\|_\nu}{1 - \|1 - \bar{\phi} \bar{\phi}_\textnormal{inv}\|_\nu}.
\end{equation}
\end{lemma}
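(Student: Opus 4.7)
The plan is to prove this by a standard Neumann series argument in the Banach algebra $\mathcal{W}_\nu$, with a minor twist: keep $\bar{\phi}_\textnormal{inv}$ bundled with $h := 1 - \bar{\phi}\bar{\phi}_\textnormal{inv}$ inside the norm so as to obtain the stated (tighter) bound, rather than the looser $\|\bar{\phi}_\textnormal{inv}\|_\nu \|h\|_\nu$.

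First, I would set $h := 1 - \bar{\phi}\bar{\phi}_\textnormal{inv} \in \mathcal{W}_\nu$. Since $\|h\|_\nu < 1$ by hypothesis and $\mathcal{W}_\nu$ is a unital Banach algebra (Lemma analogous to Lemma \ref{lem:banach_algebra}), the Neumann series $\sum_{n \ge 0} h^n$ converges absolutely in $\mathcal{W}_\nu$ to an inverse of $1 - h = \bar{\phi}\bar{\phi}_\textnormal{inv}$, with the standard estimate $\|(1-h)^{-1}\|_\nu \le (1 - \|h\|_\nu)^{-1}$. In particular $\bar{\phi}\bar{\phi}_\textnormal{inv}$ is invertible; since $\mathcal{W}_\nu$ is commutative, this immediately yields that $\bar{\phi}$ is invertible with $\bar{\phi}^{-1} = \bar{\phi}_\textnormal{inv} (1 - h)^{-1}$.

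Next, I would compute the difference algebraically. Starting from $\bar{\phi}^{-1} = \bar{\phi}_\textnormal{inv} (1-h)^{-1}$ and using $(1-h)^{-1} - 1 = h(1-h)^{-1}$, I get
\begin{equation*}
\bar{\phi}^{-1} - \bar{\phi}_\textnormal{inv} \;=\; \bar{\phi}_\textnormal{inv}\bigl[(1-h)^{-1} - 1\bigr] \;=\; \bar{\phi}_\textnormal{inv}\, h\, (1-h)^{-1}.
\end{equation*}
Applying the Banach algebra inequality \eqref{eq:banach_algebra2} and then the Neumann bound on $(1-h)^{-1}$, I obtain
\begin{equation*}
\|\bar{\phi}_\textnormal{inv} - \bar{\phi}^{-1}\|_\nu \;\le\; \|\bar{\phi}_\textnormal{inv}\, h\|_\nu \, \|(1-h)^{-1}\|_\nu \;\le\; \frac{\|\bar{\phi}_\textnormal{inv}(1 - \bar{\phi}\bar{\phi}_\textnormal{inv})\|_\nu}{1 - \|1 - \bar{\phi}\bar{\phi}_\textnormal{inv}\|_\nu},
\end{equation*}
which is exactly the stated estimate.

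There is no real obstacle here; the only thing to be careful about is the order of factoring. If one instead wrote $\bar{\phi}^{-1} - \bar{\phi}_\textnormal{inv} = \bar{\phi}^{-1}(1 - \bar{\phi}\bar{\phi}_\textnormal{inv})$ and used $\|\bar{\phi}^{-1}\|_\nu \le \|\bar{\phi}_\textnormal{inv}\|_\nu/(1-\|h\|_\nu)$, one would get the weaker bound with $\|\bar{\phi}_\textnormal{inv}\|_\nu \|h\|_\nu$ in the numerator. Keeping the product $\bar{\phi}_\textnormal{inv} h$ inside a single norm is what yields the sharper estimate; this is important in practice, since $\bar{\phi}_\textnormal{inv}$ is a numerical pseudo-inverse of $\bar{\phi}$ so $\bar{\phi}_\textnormal{inv}(1-\bar{\phi}\bar{\phi}_\textnormal{inv})$ is expected to be very small compared to the product $\|\bar{\phi}_\textnormal{inv}\|_\nu\|1-\bar{\phi}\bar{\phi}_\textnormal{inv}\|_\nu$.
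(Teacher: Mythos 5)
Your proof is correct and is essentially the paper's own argument: both write $\bar{\phi}^{-1} = \bar{\phi}_\textnormal{inv}(\bar{\phi}\bar{\phi}_\textnormal{inv})^{-1}$, expand $(\bar{\phi}\bar{\phi}_\textnormal{inv})^{-1} = (1-h)^{-1}$ by a Neumann series, factor so that $\bar{\phi}_\textnormal{inv}\,h$ stays bundled under a single norm, and close with $\sum_{n\ge 0}\|h\|_\nu^n = (1-\|h\|_\nu)^{-1}$. The only cosmetic difference is that you invoke the identity $(1-h)^{-1}-1 = h(1-h)^{-1}$ up front, whereas the paper writes out the geometric series and re-factors it; your remark about why one must not split the product $\bar{\phi}_\textnormal{inv}h$ across the norm is exactly the point of the lemma.
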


\begin{proof}
The proof is an application of Neumann series.
First, $\|1 - \bar{\phi} \bar{\phi}_\textnormal{inv}\|_\nu < 1$ implies that $\bar{\phi}$ and $\bar{\phi}_\textnormal{inv}$ are invertible. Then
\begin{align*}
\| \bar{\phi}_\textnormal{inv} - \bar{\phi}^{-1} \|_\nu
&= \| \bar{\phi}_\textnormal{inv} (1 - (\bar{\phi} \bar{\phi}_\textnormal{inv})^{-1} ) \|_\nu \\
&= \| \bar{\phi}_\textnormal{inv} (1 - (1 -( 1 - \bar{\phi} \bar{\phi}_\textnormal{inv}))^{-1} ) \|_\nu \\
&= \Big\| \bar{\phi}_\textnormal{inv} \left(1 - \sum_{n \ge 0} (1 - \bar{\phi} \bar{\phi}_\textnormal{inv})^n \right) \Big\|_\nu \\
&= \Big\| \bar{\phi}_\textnormal{inv} (1 - \bar{\phi} \bar{\phi}_\textnormal{inv}) \sum_{n \ge 0} (1 - \bar{\phi} \bar{\phi}_\textnormal{inv})^n \Big\|_\nu \\
&\le \| \bar{\phi}_\textnormal{inv} (1 - \bar{\phi} \bar{\phi}_\textnormal{inv}) \|_\nu \sum_{n \ge 0} \| 1 - \bar{\phi} \bar{\phi}_\textnormal{inv}\|_\nu^n \\
&= \frac{\| \bar{\phi}_\textnormal{inv} (1 - \bar{\phi} \bar{\phi}_\textnormal{inv}) \|_\nu}{1 - \| 1 - \bar{\phi} \bar{\phi}_\textnormal{inv}\|_\nu}.
\end{align*}
\end{proof}

\subsubsection{Estimate for $\|A DF (\bar{\chi}) - I\|_{\mathscr{B}(\mathcal{X}_\nu, \mathcal{X}_\nu)}$}
\label{sec:Z1bound}

For any element $\phi \in \mathcal{W}_\nu$, we define the associated multiplication operator $\mathcal{M}_\phi : \mathcal{W}_\nu \to \mathcal{W}_\nu$ by
\begin{equation}
[\mathcal{M}_\phi \psi](t, \eta) \bydef \phi (t, \eta) \psi (t, \eta), \qquad \text{for all } \psi \in \mathcal{W}_\nu.
\end{equation}
Since $\mathcal{P}_\nu$ can naturally be viewed as a subspace of $\mathcal{W}_\nu$ -- as constant functions are trivially periodic --, there is also a multiplication operator $\mathcal{M}_\phi : \mathcal{P}_\nu \to \mathcal{W}_\nu$ given by
\begin{equation}
[\mathcal{M}_\phi \psi](t, \eta) \bydef \phi (t, \eta) \psi (\eta), \qquad \text{for all } \psi \in \mathcal{P}_\nu.
\end{equation}
For simplicity, we use the same symbol $\mathcal{M}_\phi$ for both operators.
This slight abuse of notation should not cause confusion since the operators are essentially of the same nature.

We first consider a finite-rank operator $W_1$ approximating $DF(\bar{x})$.
Let
\begin{equation}
W_1 \bydef
\begin{pmatrix}
0 & 0 & 0 & \rho \\
-\omega_2 & -\omega_3 & -\omega_4 & \partial_t - \omega_1
\end{pmatrix},
\end{equation}
where $\omega_1 \in \mathscr{B}(\mathcal{U}_\nu, \mathcal{U}_\nu)$ and $\omega_2, \omega_3, \omega_4 \in \mathscr{B}(\mathcal{P}_\nu, \mathcal{U}_\nu)$ are such that
\begin{alignat}{2}
\omega_1 \phi &\bydef
\sum_{j = 1}^3 
\begin{pmatrix}
\omega_1^{(1,j)} \phi_j \\
\omega_1^{(2,j)} \phi_j \\
\omega_1^{(3,j)} \phi_j
\end{pmatrix}, \qquad &&\text{for all } \phi = (\phi_1, \phi_2, \phi_3) \in \mathcal{U}_\nu, \\
\omega_j \psi &\bydef
\begin{pmatrix}
\omega_j^{(1)} \psi \\
\omega_j^{(2)} \psi \\
\omega_j^{(3)} \psi
\end{pmatrix}, \qquad &&\text{for all } \psi \in \mathcal{P}_\nu, \quad j = 2, 3, 4,
\end{alignat}
with $\omega_1^{(i,j)}, \omega_2^{(i)}, \omega_3^{(i)}, \omega_4^{(i)} \in \Pi_{N, K} \mathcal{W}_\nu$ for $i, j = 1, 2, 3$.
In other words, $\omega_1$ can be visualized as the 3-by-3 matrix
\begin{equation*}
\omega_1
=
\begin{pmatrix}
\mathcal{M}_{\omega_1^{(1,1)}} & \mathcal{M}_{\omega_1^{(1,2)}} & \mathcal{M}_{\omega_1^{(1,3)}} \\
\mathcal{M}_{\omega_1^{(2,1)}} & \mathcal{M}_{\omega_1^{(2,2)}} & \mathcal{M}_{\omega_1^{(2,3)}} \\
\mathcal{M}_{\omega_1^{(3,1)}} & \mathcal{M}_{\omega_1^{(3,2)}} & \mathcal{M}_{\omega_1^{(3,3)}}
\end{pmatrix},
\end{equation*}
and $\omega_j$, for $j = 2, 3, 4$, as the 3-component vector
\begin{equation*}
\omega_j
=
\begin{pmatrix}
\mathcal{M}_{\omega_j^{(1)}} \\
\mathcal{M}_{\omega_j^{(2)}} \\
\mathcal{M}_{\omega_j^{(3)}}
\end{pmatrix}.
\end{equation*}
The objects $\omega_1$, $\omega_2$, $\omega_3$, and $\omega_4$ are numerical approximations for $\bar{\tau} \partial_u f (\bar{\zeta}_1, \bar{\zeta}_2, \bar{u})$, $f(\bar{\zeta}_1, \bar{\zeta}_2, \bar{u})$, $\bar{\tau} \partial_{\zeta_1} f(\bar{\zeta}_1, \bar{\zeta}_2, \bar{u})$, and $\bar{\tau} \partial_{\zeta_2} f(\bar{\zeta}_1, \bar{\zeta}_2, \bar{u})$, respectively.

The triangle inequality yields
\begin{equation}
\|A DF(\bar{\chi}) - I\|_{\mathscr{B}(\mathcal{X}_\nu, \mathcal{X}_\nu)}
\le \|A W_1 - I\|_{\mathscr{B}(\mathcal{X}_\nu, \mathcal{X}_\nu)} + \| A \|_{\mathscr{B}(\mathcal{X}_\nu, \mathcal{X}_\nu)} \| DF(\bar{\chi}) - W_1 \|_{\mathscr{B}(\mathcal{X}_\nu, \mathcal{X}_\nu)}.
\end{equation}
It is straightforward to check that
\begin{align}
\begin{split}
\| DF(\bar{\chi}) - W_1 \|_{\mathscr{B}(\mathcal{X}_\nu, \mathcal{X}_\nu)}
\le \max\Bigg(
&\sum_{j=1}^3 \|f_j(\bar{\zeta}_1, \bar{\zeta}_2, \bar{u}) - \omega_2^{(j)}\|_\nu,
\sum_{j=1}^3 \|\bar{\tau} \partial_{\zeta_1} f_j(\bar{\zeta}_1, \bar{\zeta}_2, \bar{u}) - \omega_3^{(j)}\|_\nu, \\
&\sum_{j=1}^3 \|\bar{\tau} \partial_{\zeta_2} f_j(\bar{\zeta}_1, \bar{\zeta}_2, \bar{u}) - \omega_4^{(j)}\|_\nu,
\max_{1\le j\le 3} \sum_{i=1}^3 \|\bar{\tau} \partial_{u_j} f_i(\bar{\zeta}_1, \bar{\zeta}_2, \bar{u}) - \omega_1^{(i,j)}\|_\nu
\Bigg),
\end{split}
\end{align}
where we used the fact that, for all $\phi \in \mathcal{W}_\nu$, we have $\|\mathcal{M}_\phi\|_{\mathscr{B}(\mathcal{P}_\nu, \mathcal{W}_\nu)} = \|\mathcal{M}_\phi\|_{\mathscr{B}(\mathcal{W}_\nu, \mathcal{W}_\nu)} = \|\phi\|_\nu$.

To control $\|A W_1 - I\|_{\mathscr{B}(\mathcal{X}_\nu, \mathcal{X}_\nu)}$, we note that $W_1$ is a block-wise banded operator with bandwidth $N K$, so it is practical to consider
\begin{equation}
\|A W_1 - I\|_{\mathscr{B}(\mathcal{X}_\nu, \mathcal{X}_\nu)}
= \max \Big(\|A W_1 \Pi_{2K} - \Pi_{2K}\|_{\mathscr{B}(\mathcal{X}_\nu, \mathcal{X}_\nu)}, \|A W_1 \Pi_{>2K} - \Pi_{>2K}\|_{\mathscr{B}(\mathcal{X}_\nu, \mathcal{X}_\nu)} \Big).
\end{equation}
Indeed, from the banded structure we have $W_1 \Pi_{2K} = \Pi_{3K} W_1 \Pi_K$, and it follows that
\begin{align}
\|A W_1 \Pi_{2K} - \Pi_{2K}\|_{\mathscr{B}(\mathcal{X}_\nu, \mathcal{X}_\nu)}
&= \|\Pi_{3K} A \Pi_{3K} W_1 \Pi_{2K} - \Pi_{2K}\|_{\mathscr{B}(\mathcal{X}_\nu, \mathcal{X}_\nu)} \nonumber\\
&= \|\Pi_{2N, 3K} A \Pi_{N, 3K} W_1 \Pi_{0, 2K} - \Pi_{0, 2K}\|_{\mathscr{B}(\mathcal{X}_\nu, \mathcal{X}_\nu)},
\end{align}
where, to obtain the second equality, we used the fact that $\Pi_{3K} A \Pi_{3K} W_1 \Pi_{2K} - \Pi_{2K}$ acts as a multiplication operator with respect to $\eta$.
On the other hand, using once more the banded structure of $W_1$, we have $\Pi_K W_1 \Pi_{>2K} = 0$, so that
\begin{align}
\|A W_1 \Pi_{>2K} - \Pi_{>2K}\|_{\mathscr{B}(\mathcal{X}_\nu, \mathcal{X}_\nu)}
&= \|A_\textnormal{tail} \Pi_{>K} W_1 \Pi_{>2K} - \Pi_{>2K}\|_{\mathscr{B}(\mathcal{X}_\nu, \mathcal{X}_\nu)} \nonumber\\
&\le \|A_\textnormal{tail}\|_{\mathscr{B}(\mathcal{X}_\nu, \mathcal{X}_\nu)} \|\Pi_{>K} \omega_1 \Pi_{>2K}\|_{\mathscr{B}(\mathcal{U}_\nu, \mathcal{U}_\nu)} \nonumber\\
&\le \frac{1}{K+1} \max_{1 \le j \le 3} \sum_{i = 1}^3 \|\omega_1^{(i,j)}\|_\nu.
\end{align}
%

\subsubsection{Estimate for $\sup_{\chi \in B_R(\bar{\chi})} \|A D^2 F(\chi)\|_{\mathscr{B}(\mathcal{X}_\nu, \mathscr{B}(\mathcal{X}_\nu, \mathcal{X}_\nu))}$}
\label{sec:Z2bound}

Since
\begin{equation*}
\sup_{\chi \in B_R(\bar{\chi})} \|A D^2 F(\chi)\|_{\mathscr{B}(\mathcal{X}_\nu, \mathscr{B}(\mathcal{X}_\nu, \mathcal{X}_\nu))}
\le \| A \|_{\mathscr{B}(\mathcal{X}_\nu, \mathcal{X}_\nu)} \sup_{\chi \in B_R(\bar{\chi})} \|D^2 F(\chi)\|_{\mathscr{B}(\mathcal{X}_\nu, \mathscr{B}(\mathcal{X}_\nu, \mathcal{X}_\nu))},
\end{equation*}
it remains to bound $\|D^2 F(\chi)\|_{\mathscr{B}(\mathcal{X}_\nu, \mathscr{B}(\mathcal{X}_\nu, \mathcal{X}_\nu))}$ for all $\chi \in B_R(\bar{\chi})$.
Introducing the notation $\nabla = (\partial_\tau, \partial_{\zeta_1}, \partial_{\zeta_2}, \partial_{u_1}, \partial_{u_2}, \partial_{u_3})$, and to simplify the notation $f_j = f_j(\zeta_1, \zeta_2, u)$, we have
\begin{align*}
\begin{split}
\nabla (\tau f_1) &=
\Bigg(
f_1, 0, 0, \tau \delta_1 \frac{u_3 - \lambda_1}{u_3 + \alpha_1}, 0, \tau \delta_1 \frac{\lambda_1 + \alpha_1}{(u_3 + \alpha_1)^2} u_1
\Bigg), \\
\nabla (\tau f_2) &=
\Bigg(
f_2, 0, 0, 0, \tau \delta_2 \frac{u_3 - \lambda_2}{u_3 + \alpha_2}, \tau \delta_2 \frac{\lambda_2 + \alpha_2}{(u_3 + \alpha_2)^2} u_2
\Bigg), \\
\nabla (\tau f_3) &=
\Bigg(
f_3, -\tau \frac{u_1}{u_3 + \alpha_1} u_3, -\tau \frac{u_2}{u_3 + \alpha_2} u_3, -\tau \zeta_1 \frac{u_3}{u_3 + \alpha_1}, -\tau \zeta_2 \frac{u_3}{u_3 + \alpha_2}, \tau \Big[ 1 - 2u_3 - \sum_{j=1}^2 \zeta_j u_j \frac{\alpha_j}{(u_3 + \alpha_j)^2} \Big]
\Bigg).
\end{split}
\end{align*}
Hence, writing $F = (F_1, F_2, F_3, F_4, F_5, F_6)$ and $\nabla = (\nabla_1, \nabla_2, \nabla_3, \nabla_4, \nabla_5, \nabla_6)$, we obtain
\begin{alignat}{2}\label{eq:Z2}
\|D^2 F(\chi)\|_{\mathscr{B}(\mathcal{X}_\nu, \mathscr{B}(\mathcal{X}_\nu, \mathcal{X}_\nu))}
&\le \max_{1 \le j \le 6} &&\sum_{i = 3}^6 \| \nabla_j \nabla F_i (\chi) \| \nonumber\\
&= \max \Bigg(
&&\sum_{j=1}^3 \| \partial_\tau \nabla (\tau f_j) \|,
\| \partial_{\zeta_1} \nabla (\tau f_3) \|,
\| \partial_{\zeta_2} \nabla (\tau f_3) \|, \nonumber\\
& &&\| \partial_{u_1} \nabla (\tau f_1) \| + \| \partial_{u_1} \nabla (\tau f_3) \|,
\| \partial_{u_2} \nabla (\tau f_2) \| + \| \partial_{u_2} \nabla (\tau f_3) \|,
\sum_{j = 1}^3 \| \partial_{u_3} \nabla (\tau f_j) \|
\Bigg),
\end{alignat}
where, for $j = 1, 2$,
\begin{align*}
\| \partial_\tau \nabla (\tau f_j) \| &= \max\left(
\| \delta_j \frac{u_3 - \lambda_j}{u_3 + \alpha_j} \|_\nu, \| \delta_j \frac{\lambda_j + \alpha_j}{(u_3 + \alpha_j)^2} u_j \|_\nu
\right), \\
\| \partial_{u_j} \nabla (\tau f_j) \| &= \max\left(
\| \delta_j \frac{u_3 - \lambda_j}{u_3 + \alpha_j} \|_\nu, \| \tau \delta_j \frac{\lambda_j + \alpha_j}{(u_3 + \alpha_j)^2} \|_\nu
\right), \\
\| \partial_{u_3} \nabla (\tau f_j) \| &= \max\left(
\| \delta_j \frac{\lambda_j + \alpha_j}{(u_3 + \alpha_j)^2} u_j \|_\nu, \| \tau \delta_j \frac{\lambda_j + \alpha_j}{(u_3 + \alpha_j)^2} \|_\nu, 2 \| \tau \delta_j \frac{\lambda_j + \alpha_j}{(u_3 + \alpha_j)^3} u_j \|_\nu
\right), \\
\| \partial_{\zeta_j} \nabla (\tau f_3) \| &= \max\left(
\| \frac{u_j u_3}{u_3 + \alpha_j} \|_\nu, \| \tau \frac{u_3}{u_3 + \alpha_j} \|_\nu, \| \tau u_j \frac{\alpha_j}{(u_3 + \alpha_j)^2} \|_\nu
\right), \\
\| \partial_{u_j} \nabla (\tau f_3) \| &= \max\left(
\| \frac{\zeta_j u_3}{u_3 + \alpha_j} \|_\nu, \| \tau \frac{u_3}{u_3 + \alpha_j} \|_\nu, \| \tau \zeta_j \frac{\alpha_j}{(u_3 + \alpha_j)^2} \|_\nu
\right),
\end{align*}
and
\begin{alignat*}{2}
\| \partial_\tau \nabla (\tau f_3) \| &= \max\Bigg(
&&\| \frac{u_1 u_3}{u_3 + \alpha_1} \|_\nu, \| \frac{u_2 u_3}{u_3 + \alpha_2} \|_\nu, \| \zeta_1 \frac{u_3}{u_3 + \alpha_1} \|_\nu, \| \zeta_2 \frac{u_3}{u_3 + \alpha_2} \|_\nu, \| 1 -2 u_3- \sum_{j=1}^2 \zeta_j u_j \frac{\alpha_j}{(u_3 + \alpha_j)^2} \|_\nu
\Bigg), \\
\| \partial_{u_3} \nabla (\tau f_3) \| &= \max\Bigg(
&&\| 1 -2u_3 - \sum_{j=1}^2 \zeta_j u_j \frac{\alpha_j}{(u_3 + \alpha_j)^2} \|_\nu,
\| \tau \frac{u_1 \alpha_1}{(u_3 + \alpha_1)^2} \|_\nu, \| \tau \frac{u_2 \alpha_2}{(u_3 + \alpha_2)^2} \|_\nu, \\
& &&\| \tau \frac{\zeta_1 \alpha_1}{(u_3 + \alpha_1)^2} \|_\nu, \| \tau \frac{\zeta_2 \alpha_2}{(u_3 + \alpha_2)^2} \|_\nu,
2\| \tau \Big[-1 + \sum_{j=1}^2 \zeta_j u_j \frac{\alpha_j}{(u_3 + \alpha_j)^3} \Big] \|_\nu
\Bigg).
\end{alignat*}

To compute the above norms, we need to bound $(u_3 + \alpha_j)^{-p}$ for $p = 1, 2, 3$ and $u_3 \in B_R(\bar{u}_3)$.
The following lemma provides a means for $p = 1$, while the cases $p = 2, 3$ are handled by using the Banach algebra properties given in \eqref{eq:banach_algebra1} and \eqref{eq:banach_algebra2}.

\begin{lemma}\label{lem:Z2_inv}
Let $R > 0$ and $\bar{\phi}, \bar{\phi}_\textnormal{inv} \in \Pi_{N, K} \mathcal{W}_\nu$.
If $\|1 - \bar{\phi} \bar{\phi}_\textnormal{inv}\|_\nu + R \|\bar{\phi}_\textnormal{inv}\|_\nu < 1$, then
\begin{equation}
\sup_{\phi \in (B_R (\bar{\phi})} \| \phi^{-1} \|_\nu
\le \frac{\|\bar{\phi}_\textnormal{inv}\|_\nu}{1 - \|1 - \bar{\phi} \bar{\phi}_\textnormal{inv}\|_\nu - R \|\bar{\phi}_\textnormal{inv}\|_\nu}.
\end{equation}
\end{lemma}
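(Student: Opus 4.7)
The plan is a standard Neumann-series perturbation argument, essentially a quantitative version of the fact that invertibility is an open condition, done entirely inside the Banach algebra $\mathcal{W}_\nu$ (Lemma \ref{lem:banach_algebra} and its analogue for $\mathcal{W}_\nu$). The previous lemma already showed that the smallness condition $\|1 - \bar{\phi}\bar{\phi}_\textnormal{inv}\|_\nu < 1$ yields invertibility of $\bar{\phi}$ with a Neumann-series bound; here I have to upgrade this to a uniform bound on the inverse of any $\phi$ in the closed ball of radius $R$ around $\bar{\phi}$, so the point is to absorb the perturbation $\phi - \bar{\phi}$ into the smallness condition.

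First I would write, for any $\phi \in \textnormal{cl}(B_R(\bar{\phi}))$, the identity
\begin{equation*}
\phi \bar{\phi}_\textnormal{inv} = 1 - \bigl(1 - \bar{\phi}\bar{\phi}_\textnormal{inv}\bigr) - (\bar{\phi} - \phi)\bar{\phi}_\textnormal{inv},
\end{equation*}
so that by the triangle inequality and the Banach-algebra property \eqref{eq:banach_algebra2},
\begin{equation*}
\|1 - \phi \bar{\phi}_\textnormal{inv}\|_\nu \le \|1 - \bar{\phi}\bar{\phi}_\textnormal{inv}\|_\nu + \|\bar{\phi} - \phi\|_\nu \|\bar{\phi}_\textnormal{inv}\|_\nu \le \|1 - \bar{\phi}\bar{\phi}_\textnormal{inv}\|_\nu + R \|\bar{\phi}_\textnormal{inv}\|_\nu < 1,
\end{equation*}
using $\|\bar{\phi}-\phi\|_\nu \le R$ and the hypothesis in the last step.

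Next, since $\|1 - \phi\bar{\phi}_\textnormal{inv}\|_\nu < 1$, the element $\phi\bar{\phi}_\textnormal{inv}$ is invertible in $\mathcal{W}_\nu$ via the Neumann series, and hence $\phi$ is invertible with $\phi^{-1} = \bar{\phi}_\textnormal{inv}(\phi\bar{\phi}_\textnormal{inv})^{-1}$. Bounding the geometric series gives
\begin{equation*}
\|\phi^{-1}\|_\nu \le \|\bar{\phi}_\textnormal{inv}\|_\nu \sum_{n \ge 0} \|1 - \phi\bar{\phi}_\textnormal{inv}\|_\nu^n = \frac{\|\bar{\phi}_\textnormal{inv}\|_\nu}{1 - \|1 - \phi\bar{\phi}_\textnormal{inv}\|_\nu} \le \frac{\|\bar{\phi}_\textnormal{inv}\|_\nu}{1 - \|1 - \bar{\phi}\bar{\phi}_\textnormal{inv}\|_\nu - R \|\bar{\phi}_\textnormal{inv}\|_\nu},
\end{equation*}
where the last inequality uses monotonicity of $x \mapsto 1/(1-x)$ on $[0,1)$ together with the bound derived in the first step. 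Taking the supremum over $\phi \in \textnormal{cl}(B_R(\bar{\phi}))$ yields the claim.

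There is no real obstacle here; the only mild subtlety is to avoid circularity when passing from invertibility of $\phi\bar{\phi}_\textnormal{inv}$ to invertibility of $\phi$, which requires noting that $\bar{\phi}_\textnormal{inv}$ is itself invertible (a consequence of the $R=0$ case of the same inequality, already established in the preceding lemma). Everything else is a single application of the Neumann series combined with the submultiplicativity of $\|\cdot\|_\nu$.
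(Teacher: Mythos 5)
Your proof is correct and follows essentially the same Neumann-series argument as the paper: both hinge on factoring $\phi^{-1} = \bar{\phi}_\textnormal{inv}(\phi\bar{\phi}_\textnormal{inv})^{-1}$, bounding $\|1-\phi\bar{\phi}_\textnormal{inv}\|_\nu \le \|1-\bar{\phi}\bar{\phi}_\textnormal{inv}\|_\nu + R\|\bar{\phi}_\textnormal{inv}\|_\nu$, and summing the geometric series. One minor remark: the circularity you flag at the end is not actually an issue here, since $\mathcal{W}_\nu$ is a commutative Banach algebra, so a one-sided inverse obtained from $\phi(\bar{\phi}_\textnormal{inv}(\phi\bar{\phi}_\textnormal{inv})^{-1}) = 1$ is automatically two-sided without any separate appeal to the invertibility of $\bar{\phi}_\textnormal{inv}$.
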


\begin{proof}
Using Neumann series, we have
\begin{align*}
\sup_{\phi \in B_R (\bar{\phi})} \|\phi^{-1}\|_\nu
&= \sup_{\phi \in B_R (\bar{\phi})} \|\bar{\phi}_\textnormal{inv} (\phi \bar{\phi}_\textnormal{inv})^{-1}\|_\nu \\
&= \sup_{\phi \in B_R (\bar{\phi})} \|\bar{\phi}_\textnormal{inv} (1 - (1 -\phi \bar{\phi}_\textnormal{inv}))^{-1}\|_\nu \\
&= \sup_{\phi \in B_R (\bar{\phi})} \|\bar{\phi}_\textnormal{inv} \sum_{n \ge 0} (1 -\phi \bar{\phi}_\textnormal{inv})^n\|_\nu \\
&\le \|\bar{\phi}_\textnormal{inv} \|_\nu \sup_{\phi \in B_R (\bar{\phi})} \sum_{n \ge 0} \|1 -\phi \bar{\phi}_\textnormal{inv}\|_\nu^n \\
&\le \|\bar{\phi}_\textnormal{inv} \|_\nu \sum_{n \ge 0} \big(\|1 - \bar{\phi} \bar{\phi}_\textnormal{inv}\|_\nu + R \|\bar{\phi}_\textnormal{inv}\|_\nu\big)^n \\
&= \frac{\|\bar{\phi}_\textnormal{inv} \|_\nu}{1 - \| 1 - \bar{\phi} \bar{\phi}_\textnormal{inv}\|_\nu - R \|\bar{\phi}_\textnormal{inv}\|_\nu}.
\end{align*}
\end{proof}

Now, the estimate \eqref{eq:Z2} is obtained by using the triangle inequality and Lemma \ref{lem:Z2_inv} repeatedly.
To illustrate, for $j=1, 2$, we get
\begin{align*}
\sup_{\chi = (\tau, \zeta_1, \zeta_2, u) \in B_R(\bar{\chi})} \| \partial_\tau \nabla (\tau f_j) \|
&= \max\left(
\| \delta_j \frac{u_3 - \lambda_j}{u_3 + \alpha_j} \|_\nu, \| \delta_j \frac{\lambda_j + \alpha_j}{(u_3 + \alpha_j)^2} u_j \|_\nu
\right) \\
&\le \delta_j \max\left(
(\| \bar{u}_3 - \lambda_j \|_\nu  + R) \mathfrak{C}_j, (\lambda_j + \alpha_j) (\|\bar{u}_j\|_\nu + R) \mathfrak{C}_j^2
\right),
\end{align*}
where $\mathfrak{C}_j$ is obtained using Lemma \ref{lem:Z2_inv}, namely, given a finite approximation, denoted $\bar{\phi}_{\textnormal{inv},j}$, of the inverse of $\bar{u}_3 + \alpha_j$, we obtain
\begin{equation*}
\mathfrak{C}_j \ge \frac{\|\bar{\phi}_{\textnormal{inv},j}\|_\nu}{1 - \|1 - (\bar{u}_3 + \alpha_j)\bar{\phi}_{\textnormal{inv},j}\|_\nu - \|\bar{\phi}_{\textnormal{inv},j}\|_\nu R}.
\end{equation*}
%

\subsection{Real-valued solutions and boundary crossing}
\label{sec:aposteriori}

Given $\bar{\chi} \in \Pi_{N, K} \mathcal{X}_\nu$, if Theorem \ref{thm:nk} holds, then there exists a unique $\tilde{\chi} = (\tilde{\tau}, \tilde{\zeta}_1, \tilde{\zeta}_2, \tilde{u}) \in B_r(\bar{\chi})$, for some $r > 0$, satisfying $F(\tilde{\chi}) = 0$.
Whence, from \eqref{eq:kappa}, we have
\begin{equation}
\eta (\kappa) = \frac{2 \kappa_1 \kappa_2 - \kappa(\kappa_1 + \kappa_2)}{\kappa(\kappa_1 - \kappa_2)},
\end{equation}
such that, for any given $\kappa \in [\kappa_1, \kappa_2]$,
\begin{equation}
\begin{cases}
X_j (t, \kappa) =
\displaystyle \kappa \tilde{\zeta}_j (\eta(\kappa)) \frac{\gamma y_j}{m_j} \tilde{u}_j \left(\frac{\gamma}{\tilde{\tau}(\eta(\kappa))} t, \eta(\kappa)\right), & j =1, 2, \\
S (t, \kappa) \hspace{0.2cm}=
\displaystyle \kappa \tilde{u}_3 \left(\frac{\gamma}{\tilde{\tau}(\eta(\kappa))} t, \eta(\kappa)\right),
\end{cases}
\end{equation}
is a $2\pi\tau(\eta(\kappa))$-periodic solution to the original system \eqref{eq:original_model} as desired.

However, functions in $\mathcal{X}_\nu$ are in principle complex-valued.
So we still need to verify that we can obtain a zero $\tilde{\chi}$ of $F$ which is real-valued.
Such a property follows from a complex conjugacy symmetry.
Consider the linear operator $\Sigma : \mathcal{X}_\nu \to \mathcal{X}_\nu$ defined by, for all $\chi = (\tau, \zeta_1, \zeta_2, u) \in \mathcal{X}_\nu$, $u = (u_1, u_2, u_3)$,
\begin{equation}
\Sigma (\chi) \bydef (\tau^*, \zeta_1^*, \zeta_2^*, \Sigma_0 (u_1), \Sigma_0 (u_2), \Sigma_0 (u_3)),
\end{equation}
where the superscript $^*$ denotes to the complex conjugate, i.e., for a function $\psi \in C^\infty ([-1, 1], \mathbb{C})$,
\begin{equation}
(\psi^*)_n = \psi_n^*, \qquad n \in \mathbb{N}_0,
\end{equation}
and where, for a function $\phi \in C^\infty (\mathbb{R}/2\pi\mathbb{Z} \times [-1, 1], \mathbb{C})$,
\begin{equation}
( \Sigma_0 (\phi) )_{n, k} \bydef \phi_{n, -k}^*, \qquad n  \in \mathbb{N}_0, \quad k \in \mathbb{Z}.
\end{equation}

Clearly, if $\Sigma (\chi) = \chi$ for $\chi = (\tau, \zeta_1, \zeta_2, u) \in \mathcal{X}_\nu$, then $\tau(\eta), \zeta_1 (\eta), \zeta_2 (\eta), u(t, \eta) \in \mathbb{R}$ for all $t \in \mathbb{R}, \eta \in [-1, 1]$.
It turns out that it is sufficient for the approximate zero $\bar{\chi}$ to satisfy $\Sigma (\bar{\chi}) = \bar{\chi}$, to ensure that the true solution $\tilde{\chi}$ also verifies it.

\begin{lemma}
Suppose that Theorem~\ref{thm:nk} holds, that is, there exist $\bar{\chi} \in \Pi_{N, K} \mathcal{X}_\nu$, and a unique $\tilde{\chi} \in B_r (\bar{\chi})$, for some $r > 0$, such that $F(\tilde{\chi}) = 0$.
If $\Sigma (\bar{\chi}) = \bar{\chi}$, then $\Sigma (\tilde{\chi}) = \tilde{\chi}$.
\end{lemma}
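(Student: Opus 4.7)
The strategy is a standard symmetry-plus-uniqueness argument: I show that $\Sigma$ is an isometry of $\mathcal{X}_\nu$ that intertwines with $F$, so that $\Sigma(\tilde{\chi})$ is a zero of $F$ inside $\textnormal{cl}(B_r(\bar{\chi}))$, and then invoke the uniqueness statement of Corollary \ref{cor:radii_polynomial} to conclude $\Sigma(\tilde{\chi}) = \tilde{\chi}$.

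First, I verify that $\Sigma : \mathcal{X}_\nu \to \mathcal{X}_\nu$ is a linear isometry. This is immediate from the definitions of $\|\cdot\|_{\mathcal{P}_\nu}$ and $\|\cdot\|_\nu$, which are sums of absolute values of the underlying (Fourier--)Chebyshev coefficients and hence invariant under both complex conjugation and the reindexing $k \mapsto -k$. Combined with the hypothesis $\Sigma(\bar{\chi}) = \bar{\chi}$ and the linearity of $\Sigma$, this yields
\begin{equation*}
\| \Sigma(\tilde{\chi}) - \bar{\chi} \|_{\mathcal{X}_\nu}
= \| \Sigma(\tilde{\chi} - \bar{\chi}) \|_{\mathcal{X}_\nu}
= \| \tilde{\chi} - \bar{\chi} \|_{\mathcal{X}_\nu}
\le r,
\end{equation*}
so $\Sigma(\tilde{\chi}) \in \textnormal{cl}(B_r(\bar{\chi}))$.

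Second, I establish the intertwining identity $F \circ \Sigma = \Sigma \circ F$ component by component. The nonlinearity $\tau f(\zeta_1, \zeta_2, u)$ is a rational expression in $(\tau, \zeta_1, \zeta_2, u)$ whose remaining coefficients $\alpha_j(\eta)$, $\lambda_j(\eta)$, $\delta_j$ are real for every $\eta \in [-1, 1]$; hence conjugating all of its arguments conjugates its output, which is the required commutation. The operator $\partial_t$ intertwines with $\Sigma_0$ by a direct computation on Fourier--Chebyshev coefficients, since the relabeling $k \mapsto -k$ cancels the sign flip produced by $\overline{ik}$. The normalization components $[\Pi_{N,K} u_j](0,\,\cdot) - 1$ are also invariant, because both the truncation $\Pi_{N,K}$ and the evaluation at $t = 0$ manifestly commute with $\Sigma_0$. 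Finally, for the phase condition, the numerical template $\Gamma \in \Pi_{N,K}\mathcal{U}_\nu$ is chosen real-valued, so $\Sigma_0(\partial_t \Gamma) = \partial_t \Gamma$, and with the bilinear interpretation of $\langle\cdot, \cdot\rangle$ on $\mathbb{C}^3$ the integral in \eqref{eq:phase_condition} commutes with complex conjugation.

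Combining the above, $F(\Sigma(\tilde{\chi})) = \Sigma(F(\tilde{\chi})) = 0$, and the uniqueness of the zero of $F$ in $\textnormal{cl}(B_r(\bar{\chi}))$ provided by Corollary \ref{cor:radii_polynomial} forces $\Sigma(\tilde{\chi}) = \tilde{\chi}$. I expect the only delicate point to be bookkeeping for the phase condition -- ensuring that the numerically computed $\Gamma$ is genuinely $\Sigma$-invariant and clarifying the bilinear-versus-sesquilinear convention for $\langle \cdot, \cdot \rangle$ -- since all other intertwinings reduce straightforwardly to the reality of the parameters and the definition of $\Sigma_0$.
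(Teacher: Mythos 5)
Your proof is correct and follows essentially the same route as the paper's: show that $\Sigma(\tilde{\chi})$ is a zero of $F$ lying in $\textnormal{cl}(B_r(\bar{\chi}))$, using the intertwining $F\circ\Sigma = \Sigma\circ F$ together with the fact that $\Sigma$ is a linear isometry fixing $\bar{\chi}$, and then invoke the local uniqueness from Corollary \ref{cor:radii_polynomial}. The paper asserts the intertwining and isometry without justification, whereas you spell them out component by component — including the correct observation that $\partial_t\Gamma$ must itself be $\Sigma_0$-invariant (i.e.\ real-valued) for the phase condition to commute with $\Sigma$ — which is a useful elaboration rather than a genuinely different argument.
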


\begin{proof}
We have that $F(\Sigma (\tilde{\chi})) = \Sigma (F(\tilde{\chi})) = \Sigma (0) = 0$ and $\|\Sigma (\tilde{\chi}) - \bar{\chi}\|_{\mathcal{X}_\nu} = \|\Sigma (\tilde{\chi}) - \Sigma (\bar{\chi})\|_{\mathcal{X}_\nu} = \|\Sigma (\tilde{\chi} - \bar{\chi})\|_{\mathcal{X}_\nu} = \|\tilde{\chi} - \bar{\chi}\|_{\mathcal{X}_\nu} \le r$.
Hence, by the local uniqueness of $\tilde{\chi}$, it follows that $\Sigma (\tilde{\chi}) = \tilde{\chi}$ as desired.
\end{proof}

Furthermore, we must check that $\tilde{\tau} > 0$ and that we reach the boundary planes $Q_1$ and $Q_2$.
The following lemma shows how to rigorously evaluate $\tilde{\tau}$, $\tilde{\zeta}_1$, and $\tilde{\zeta}_2$.

\begin{lemma}\label{lem:enclosure-order0}
Let $\bar{\chi} \in \Pi_{N, K} \mathcal{X}_\nu$, $r > 0$ and $\tilde{\chi} = (\tilde{\tau}, \tilde{\zeta}_1, \tilde{\zeta}_2, \tilde{u})\in B_r (\bar{\chi})$.
For any $\eta \in [-1, 1]$, we have
\begin{equation}
|\tilde{\tau} (\eta) - \bar{\tau} (\eta)| \le r,\qquad
|\tilde{\zeta}_j (\eta) - \bar{\zeta}_j (\eta)| \le r, \quad j = 1, 2.
\end{equation}
\end{lemma}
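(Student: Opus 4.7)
The plan is to exploit two elementary properties: the sup bound $|\mathcal{T}_n(\eta)| \le 1$ for all $\eta \in [-1,1]$ (see \eqref{eq:cos_cheb}), and the obvious fact that each of $\|\tau\|_{\mathcal{P}_\nu}$, $\|\zeta_1\|_{\mathcal{P}_\nu}$, $\|\zeta_2\|_{\mathcal{P}_\nu}$ is dominated by $\|\chi\|_{\mathcal{X}_\nu}$.

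First, I would establish a general pointwise-to-norm inequality for $\mathcal{P}_\nu$: given any $\psi \in \mathcal{P}_\nu$ with $\nu \ge 1$, the expansion $\psi(\eta) = \sum_{n \in \mathbb{Z}} \psi_{|n|} \mathcal{T}_{|n|}(\eta)$ together with $|\mathcal{T}_{|n|}(\eta)| \le 1$ and $\nu^{|n|} \ge 1$ yields
\begin{equation*}
|\psi(\eta)| \le \sum_{n \in \mathbb{Z}} |\psi_{|n|}| |\mathcal{T}_{|n|}(\eta)| \le \sum_{n \in \mathbb{Z}} |\psi_{|n|}| \nu^{|n|} = \|\psi\|_{\mathcal{P}_\nu},
\end{equation*}
uniformly for $\eta \in [-1,1]$.

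Next, I would apply this inequality to $\psi = \tilde{\tau} - \bar{\tau}$ and $\psi = \tilde{\zeta}_j - \bar{\zeta}_j$, each of which lies in $\mathcal{P}_\nu$ (since $\bar{\chi} \in \Pi_{N,K}\mathcal{X}_\nu \subset \mathcal{X}_\nu$ and $\tilde{\chi} \in \mathcal{X}_\nu$). From the definition of $\|\cdot\|_{\mathcal{X}_\nu}$ as the sum of the four component norms, each component norm is bounded above by the full $\mathcal{X}_\nu$-norm, so
\begin{equation*}
\|\tilde{\tau} - \bar{\tau}\|_{\mathcal{P}_\nu} \le \|\tilde{\chi} - \bar{\chi}\|_{\mathcal{X}_\nu} \le r, \qquad \|\tilde{\zeta}_j - \bar{\zeta}_j\|_{\mathcal{P}_\nu} \le \|\tilde{\chi} - \bar{\chi}\|_{\mathcal{X}_\nu} \le r,
\end{equation*}
for $j = 1,2$. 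Combining the two inequalities gives the stated pointwise bounds for all $\eta \in [-1,1]$.

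There is no real obstacle here: the lemma is essentially a statement that the $\mathcal{P}_\nu$-norm dominates the sup norm on $[-1,1]$ when $\nu \ge 1$, followed by a trivial projection onto individual coordinates. The only thing to be careful about is keeping the $|n|$ indexing consistent with the two-sided sum convention used in the definition of $\|\cdot\|_{\mathcal{P}_\nu}$, but this does not affect the bound since both the Chebyshev sum and the norm sum use the same $\nu^{|n|}$ weighting on the same index set.
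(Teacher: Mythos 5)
Your argument is correct and is essentially the paper's proof, just spelled out: the paper's proof consists solely of the observation $\sup_{\eta \in [-1,1]} |\psi(\eta)| \le \|\psi\|_{\mathcal{P}_\nu}$ for $\psi \in \mathcal{P}_\nu$, which is exactly the general inequality you derive from $|\mathcal{T}_{|n|}(\eta)| \le 1$ and $\nu^{|n|} \ge 1$. Applying it to the differences $\tilde{\tau}-\bar{\tau}$ and $\tilde{\zeta}_j - \bar{\zeta}_j$ and using that each component $\mathcal{P}_\nu$-norm is dominated by $\|\cdot\|_{\mathcal{X}_\nu}$, as you do, is the implicit closing step.
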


\begin{proof}
For any function $\psi \in \mathcal{P}_\nu$, we have $\sup_{\eta \in [-1, 1]} |\psi(\eta)| \le \| \psi \|_\nu$.
\end{proof}

The previous lemma can only provide an enclosure of the exact value.
So the sign of $\tilde{\zeta}_j$ becomes undetermined as we approach $\hat{\eta}_j$, assuming there exists $\hat{\eta}_1, \hat{\eta}_2 \in (-1, 1)$ such that $\tilde{\zeta}_2 (\hat{\eta}_1) = 0$ and $\tilde{\zeta}_1 (\hat{\eta}_2) = 0$.
Let $h_1^-, h_1^+, h_2^-$, and $h_2^+$ satisfy $-1 < h_1^- < h_1^+ \le 0 \le h_2^- < h_2^+ < 1$.
Hence, to conclude that the family crosses the boundary plane $Q_1$ (resp., $Q_2$) only at some $\hat{\eta}_1 \in (h_1^-, h_1^+)$ (resp., $\hat{\eta}_2 \in (h_2^-, h_2^+)$), we need to show:
\begin{itemize}
\item Positive periodic solutions in the interior: $\tilde{\zeta}_1(\eta), \tilde{\zeta}_2(\eta) > 0$ for all $\eta \in [h_1^+, h_2^-]$;
\item Crossing of $Q_1$: $\tilde{\zeta}_2(h_1^-) < 0$ and $\frac{\mathrm{d}}{\mathrm{d}\eta} \tilde{\zeta}_2 (\eta) > 0$ for all $\eta \in [h_1^-, h_2^+]$;
\item Crossing of $Q_2$: $\tilde{\zeta}_1(h_2^+) < 0$ and $\frac{\mathrm{d}}{\mathrm{d}\eta} \tilde{\zeta}_1 (\eta) < 0$ for all $\eta \in [h_2^-, h_2^+]$.
\end{itemize}

The next lemma provides a way to retrieve the sign of the derivative for a function in $\mathcal{P}_\nu$, such as $\tilde{\zeta}_1$ and $\tilde{\zeta}_2$.

\begin{lemma}\label{lem:enclosure-order1}
Let $\bar{\psi} \in \Pi_N \mathcal{P}_\nu$, $\nu > 1$, $r > 0$ and $\tilde{\psi} \in B_r (\bar{\psi})$.
It follows that
\begin{equation}\label{eq:sign}
\textnormal{sign} \left(\frac{\mathrm{d}}{\mathrm{d} \eta} \tilde{\psi} (\eta)\right)
= \textnormal{sign} \left(\sum_{n\ge1} n \tilde{\psi}_n \sin (n \cos^{-1}(\eta))\right), \qquad \eta \in (-1, 1),
\end{equation}
where the infinite sum in the right-hand side satisfies
\begin{equation} \label{equation-69}
\Bigg|\sum_{n \ge 1} n \tilde{\psi}_n \sin (n \cos^{-1}(\eta))
- \sum_{n = 1}^N n \bar{\psi}_n \sin (n \cos^{-1}(\eta))\Bigg| \le r \Bigg(
\sum_{n = 1}^N n |\sin (n \cos^{-1}(\eta))| +
\frac{(\nu - 1)N + \nu}{(\nu - 1)^2 \nu^N}\Bigg).
\end{equation}
\end{lemma}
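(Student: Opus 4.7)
The proof splits naturally into two independent pieces: the sign identity \eqref{eq:sign}, which reduces to a clean change of variable in the Chebyshev basis, and the quantitative error bound \eqref{equation-69}, which is a bookkeeping argument combined with an explicit evaluation of a geometric-type series.

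For the sign identity, I would set $\theta = \cos^{-1}(\eta) \in (0, \pi)$ and use the defining identity $\mathcal{T}_n(\cos\theta) = \cos(n\theta)$ together with the chain rule to obtain
\begin{equation*}
\frac{\mathrm{d}\mathcal{T}_n}{\mathrm{d}\eta}(\eta) = \frac{n\sin(n\cos^{-1}(\eta))}{\sqrt{1-\eta^2}}, \qquad \eta \in (-1,1).
\end{equation*}
Since $\nu > 1$ places $\tilde\psi$ inside a Bernstein ellipse strictly containing $[-1,1]$, the differentiated series converges uniformly on compact subsets of $(-1,1)$, legitimizing term-by-term differentiation of $\tilde\psi(\eta) = \tilde\psi_0 + 2\sum_{n\geq 1}\tilde\psi_n\mathcal{T}_n(\eta)$. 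Summing then yields
\begin{equation*}
\frac{\mathrm{d}\tilde\psi}{\mathrm{d}\eta}(\eta) = \frac{2}{\sqrt{1-\eta^2}}\sum_{n\geq 1}n\tilde\psi_n\sin(n\cos^{-1}(\eta)),
\end{equation*}
which differs from the right-hand side of \eqref{eq:sign} only by the strictly positive prefactor $2/\sqrt{1-\eta^2}$, so the sign equality is immediate.

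For the truncation estimate, I would split
\begin{equation*}
\sum_{n\ge 1} n\tilde\psi_n\sin(n\cos^{-1}(\eta)) - \sum_{n=1}^N n\bar\psi_n\sin(n\cos^{-1}(\eta)) = \sum_{n=1}^N n(\tilde\psi_n - \bar\psi_n)\sin(n\cos^{-1}(\eta)) + \sum_{n>N} n\tilde\psi_n\sin(n\cos^{-1}(\eta)),
\end{equation*}
invoking $\bar\psi_n = 0$ for $n > N$. The triangle inequality together with the coefficient-wise bound $|\tilde\psi_n - \bar\psi_n|\nu^n \le r$ (a direct consequence of $\|\tilde\psi-\bar\psi\|_{\mathcal{P}_\nu}\le r$) controls the head by $r\sum_{n=1}^N n|\sin(n\cos^{-1}(\eta))|$ (using $\nu^n \ge 1$) and the tail by $r\sum_{n>N} n\nu^{-n}$ (using $|\sin| \le 1$ and $|\tilde\psi_n| = |\tilde\psi_n - \bar\psi_n|$).

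The only genuine calculation is the closed form of the tail series. I would derive it by differentiating the geometric identity $\sum_{n\geq 0}x^n = (1-x)^{-1}$ and shifting by $N$, yielding $\sum_{n>N} nx^n = x^{N+1}\bigl(\tfrac{1}{(1-x)^2} + \tfrac{N}{1-x}\bigr)$ for $|x|<1$; substituting $x = 1/\nu$ and simplifying produces exactly $\frac{(\nu-1)N+\nu}{(\nu-1)^2\nu^N}$. There is no real obstacle in this proof: the entire argument rests on the strict inequality $\nu > 1$, which simultaneously supplies the analyticity needed to differentiate term-by-term and the summability of the geometric tail.
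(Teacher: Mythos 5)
Your proposal is correct and takes essentially the same route as the paper: the sign identity comes from multiplying the term-by-term-differentiated Chebyshev series by the positive factor $\sin(\cos^{-1}\eta)=\sqrt{1-\eta^2}$, and the error bound comes from splitting at $n=N$, using $|\tilde\psi_n-\bar\psi_n|\nu^n\le r$ on the head and $|\tilde\psi_n|\le r\nu^{-n}$ (with $\bar\psi_n=0$) on the tail, followed by the closed form $\sum_{n>N}n\nu^{-n}=\frac{(\nu-1)N+\nu}{(\nu-1)^2\nu^N}$. The paper is terser (it leaves the head/tail split and the justification of term-by-term differentiation implicit), but the argument is identical.
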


\begin{proof}
Using the identity \eqref{eq:cos_cheb} with $\theta \in [0, \pi]$, we obtain the relation
\begin{equation*}
\left(\frac{\mathrm{d}}{\mathrm{d} \eta}\Big|_{\eta = \cos(\theta)} \tilde{\psi} (\eta)\right) \sin(\theta) = 2 \sum_{n \ge 1} n \tilde{\psi}_n \sin (n \theta),
\end{equation*}
which proves \eqref{eq:sign}.
Moreover, since $\bar{\psi} \in \Pi_N \mathcal{P}_\nu$, we have for $n > N$ that
\begin{equation*}
|\tilde{\psi}_n| \nu^n
\le \sum_{|n| \le N} |\tilde{\psi}_{|n|} - \bar{\psi}_{|n|}| \nu^{|n|} + \sum_{|n| > N} |\tilde{\psi}_{|n|}| \nu^{|n|}
= \| \tilde{\psi} - \bar{\psi} \|_\nu
\le r.
\end{equation*}
Therefore, $|\tilde{\psi}_n| \le r \nu^{-n}$ for $n > N$, and, since $\nu > 1$, we have
\begin{equation*}
\sum_{n > N} \frac{n}{\nu^n} = \frac{(\nu - 1)N + \nu}{(\nu - 1)^2 \nu^N}.
\end{equation*}
\end{proof}



\section{Proof of stability}
\label{sec:stability}

We assume that Theorem \ref{thm:nk} holds such that there exists a locally unique $\tilde{\chi} = (\tilde{\tau}, \tilde{\zeta}_1, \tilde{\zeta}_2, \tilde{u}) \in \mathcal{X}_\nu$ satisfying $F (\tilde{\chi}) = 0$.
In this section, we show how to solve the eigenproblem for the whole branch of periodic orbits, and provide a sufficient criterion for such a family to be comprised of stable periodic orbits.
The underlying argument of the proof is similar to the one used for the proof of existence in Section \ref{sec:continuation}, in that it hinges on a local contraction argument.
However, the boundary crossing (where transcritical bifurcations occur) requires us to verify stability by different means when the branch is ``close to'' and ``far from'' the boundary; the adjectives ``close'' and ``far'' are to be quantified in this section.

From Floquet theory, the local stability of a periodic orbit to the auxiliary system \eqref{eq:model_blowup} is determined by studying the linearization of the vector field around it:
\begin{equation}\label{eq:linearization}
\partial_t v = \tilde{\tau} \partial_u f(\tilde{\zeta}_1, \tilde{\zeta}_2, \tilde{u}) v,
\end{equation}
where
\begin{equation}
\partial_u f(\tilde{\zeta}_1, \tilde{\zeta}_2, \tilde{u})
=
\begin{pmatrix}
\displaystyle \delta_1 \left(\frac{\tilde{u}_3 - \lambda_1}{\tilde{u}_3 + \alpha_1}\right) & 0 & \displaystyle \delta_1 \left(\frac{\alpha_1 + \lambda_1}{(\tilde{u}_3 + \alpha_1)^2}\right) \tilde{u}_1 \\
0 & \displaystyle \delta_2 \left(\frac{\tilde{u}_3 - \lambda_2}{\tilde{u}_3 + \alpha_2}\right) & \displaystyle \delta_2 \left(\frac{\alpha_2 + \lambda_2}{(\tilde{u}_3 + \alpha_2)^2}\right) \tilde{u}_2 \\
\displaystyle - \zeta_1 \frac{\tilde{u}_3}{\tilde{u}_3 + \alpha_1} & \displaystyle - \zeta_2 \frac{\tilde{u}_3}{\tilde{u}_3 + \alpha_2} & \displaystyle 1 - 2 \tilde{u}_3 - \alpha_1 \zeta_1 \frac{\tilde{u}_1}{(\tilde{u}_3 + \alpha_1)^2} - \alpha_2 \zeta_2 \frac{\tilde{u}_2}{(\tilde{u}_3 + \alpha_2)^2}
\end{pmatrix}.
\end{equation}

Let $\textnormal{Mat}_{m \times p} (\mathcal{R})$ be the set of $m$-by-$p$ matrices over the commutative ring $\mathcal{R}$.
The fundamental matrix solution to the linearized system \eqref{eq:linearization} can be expressed in the form $$\Phi(t) = V(t) e^{C t},$$ where $V : \mathbb{R} \to \textnormal{Mat}_{3\times3} (\mathbb{R})$ is $2\pi$-periodic, $C \in \textnormal{Mat}_{3\times3} (\mathbb{C})$, and $V(0) = I$.
The Floquet exponents $\mu_0, \mu_1, \mu_2$ are the eigenvalues of $C$.
Since $\partial_t u$ satisfies \eqref{eq:linearization}, corresponding to the trivial Floquet exponent $\mu_0 = 0$, it remains to find two more eigenvalues $\mu_1$ and $\mu_2$.
If their real part satisfy $\Re(\mu_1) < 0$ and $\Re(\mu_2) < 0$, then the periodic orbit is stable.
It is straightforward to show that the real part of the Floquet exponents associated with the original system \eqref{eq:original_model} have the same signs as the ones obtained by studying \eqref{eq:linearization}.
Indeed, the Floquet exponents of both systems are equal up to a scaling by $\tilde{\tau}/\gamma$; see the change of variables introduced in \eqref{eq:normalization}.

Now, along the branch of periodic orbits, the eigenvalues vary and changes in the algebraic and geometric multiplicity can occur.
This suggests that formulating a zero-finding problem to solve for the eigenpairs along the whole branch is not generally appropriate.
Instead, we will find $C$ and $V$ directly by solving the initial value problem
\begin{equation}\label{eq:floquet_ivp}
\begin{cases}
\partial_t V + V C = \tilde{\tau} \partial_u f(\tilde{\zeta}_1, \tilde{\zeta}_2, \tilde{u}) V, \\
V(0) = I_{3\times 3}.
\end{cases}
\end{equation}
To do so, consider the Banach space
\begin{equation}
\mathcal{Y}_\nu \bydef \textnormal{Mat}_{3 \times 3}(\mathcal{P}_\nu) \times \textnormal{Mat}_{3 \times 3}(\mathcal{U}_\nu),
\end{equation}
endowed with the norm given by
\begin{equation}
\| \upsilon \|_{\mathcal{Y}_\nu} \bydef \sum_{i,j=1}^3 \| C_{i,j} \|_\nu + \sum_{i=1}^3 \sum_{j=1}^3 \| V_{i,j} \|_{\mathcal{U}_\nu}, \qquad \text{for all } \upsilon = (C, V) \in \mathcal{Y}_\nu.
\end{equation}
The truncation operators $\Pi_K, \Pi_{N,K}$ naturally extend to $\mathcal{Y}_\nu$ by acting component-wise.
Then, the Floquet normal form corresponds to a zero of the mapping $G : \mathcal{D}(G) \subset \mathcal{Y}_\nu \to \mathcal{Y}_\nu$ given by
\begin{equation}\label{eq:G}
G (C, V)
\bydef
\begin{pmatrix}
\Pi_{N,K} V(0) - I_{3\times 3} \\
\partial_t V + V C - \tilde{\tau} \partial_u f(\tilde{\zeta}_1, \tilde{\zeta}_2, \tilde{u}) V
\end{pmatrix}.
\end{equation}

If $G (\tilde{C}, \tilde{V}) = 0$, then we can control the spectrum of $\tilde{C}$ by using the Gershgorin circle theorem.
However, as we cross the boundary planes $Q_1, Q_2$ at respectively $\hat{\eta}_1, \hat{\eta}_2$, one eigenvalue crosses the imaginary axis through the origin $0 \in \mathbb{C}$ due to the transcritical bifurcation.
Thus, in a neighbourhood of $\hat{\eta}_j$, the sign of the real part of this ``crossing eigenvalue'' is undetermined.
Recall that in Section \ref{sec:aposteriori} we have considered a slightly smaller interval $[h_1^-, h_2^+] \subset [-1, 1]$ which we partitioned into the three pieces:
\begin{enumerate}
\item[(i)] $[h_1^-, h_1^+]$ where the family crosses $Q_1$ at $\hat{\eta}_1 \in (h_1^-, h_1^+)$;
\item[(ii)] $[h_1^+, h_2^-]$ where the family is in the interior $\mathbb{R}^3_+$;
\item[(iii)] $[h_2^-, h_2^+]$ where the family crosses $Q_2$ at $\hat{\eta}_2 \in (h_2^-, h_2^+)$.
\end{enumerate}
We show that for all $\eta \in [h_1^+, h_2^-]$ the family of periodic orbits is stable, and in $[h_1^-, h_1^+]$ and $[h_2^-, h^+_2]$ we verify that one eigenvalue remains in a bounded region of the left-half plane of $\mathbb{C}$.
The procedure consists in the following steps.
\begin{enumerate}
\item
We solve \eqref{eq:floquet_ivp} for all $\eta \in [-1, 1]$ by finding a zero $(\tilde{C}, \tilde{V}) \in \mathcal{Y}_\nu$ of $G$.
To prove the existence of a zero of $G$, we rely on a contraction argument in the vein of Section \ref{sec:continuation}; in fact, since $G$ is quadratic, verifying the assumptions of Theorem \ref{thm:nk} is easier, and we postpone the details to Appendix \ref{app:contraction}.

\item
Since $0 \in [h_1^+, h_2^-]$, we verify that $\Re(\mu_1(0)) < 0$ and $\Re(\mu_2(0)) < 0$.
To this end, we apply the Gershgorin circle theorem to $\Xi^{-1} \tilde{C}(0) \Xi$, since this operator has the same spectrum as $\tilde{C}(0)$, and where $\Xi$ is an approximate (numerical) eigenbasis of $\tilde{C}(0)$ so that $\Xi^{-1} \tilde{C}(0) \Xi$ is almost diagonal.

Then, for some compact region $\Omega \subset \{ z \in \mathbb{C} \, : \, \Re(z) < 0\}$ (in practice $\Omega$ is a rectangular box found numerically), we check that $\mu_1(\eta) \in \Omega$ and $\mu_2(\eta) \in \Omega$ for all $\eta \in [h_1^+, h_2^-]$.
We accomplish this by verifying
\begin{equation}
\sup_{\eta \in [h^+_1, h^-_2]} \sup_{z \in \partial \Omega} \| \big(z I_{3 \times 3} - \tilde{\tau} (\eta) \partial_u f(\tilde{\zeta}_1 (\eta), \tilde{\zeta}_2 (\eta), \tilde{u} (\eta))\big)^{-1} \|_1 < \infty,
\end{equation}
using, in particular, Lemma \ref{lem:enclosure-order0}.

\item We repeat the strategy of Step 2 on $[\eta_1^-, \eta_1^+]$ (resp., $[\eta_2^-, \eta_2^+]$) to show that one eigenvalue of $\tilde{C}(\eta)$ remains in a compact region of $\{z \in \mathbb{C} \, : \, \Re(z) < 0 \}$ for all $\eta \in [\eta_1^-, \eta_1^+]$ (resp., $\eta \in [\eta_2^-, \eta_2^+]$).
This suffices to show that the family of periodic orbits loses its stability exactly at the crossing of $Q_1$ and $Q_2$ occurring at $\hat{\eta}_1$ and $\hat{\eta}_2$ respectively.
Indeed, only one eigenvalue can cross the imaginary axis.
Since this occurs through the origin $0$ at $Q_1$ and $Q_2$, any additional crossing of the imaginary axis must also pass through the origin $0$.
The contraction argument guarantees that $DF(\tilde{\chi})$ is invertible which, in particular, forbids the two non-trivial Floquet exponents $\mu_1$ and $\mu_2$ to vanish except on $Q_1$ and $Q_2$ -- each being crossed exactly once.
\end{enumerate}

\begin{figure}
\centering
\includegraphics[width=0.5\textwidth]{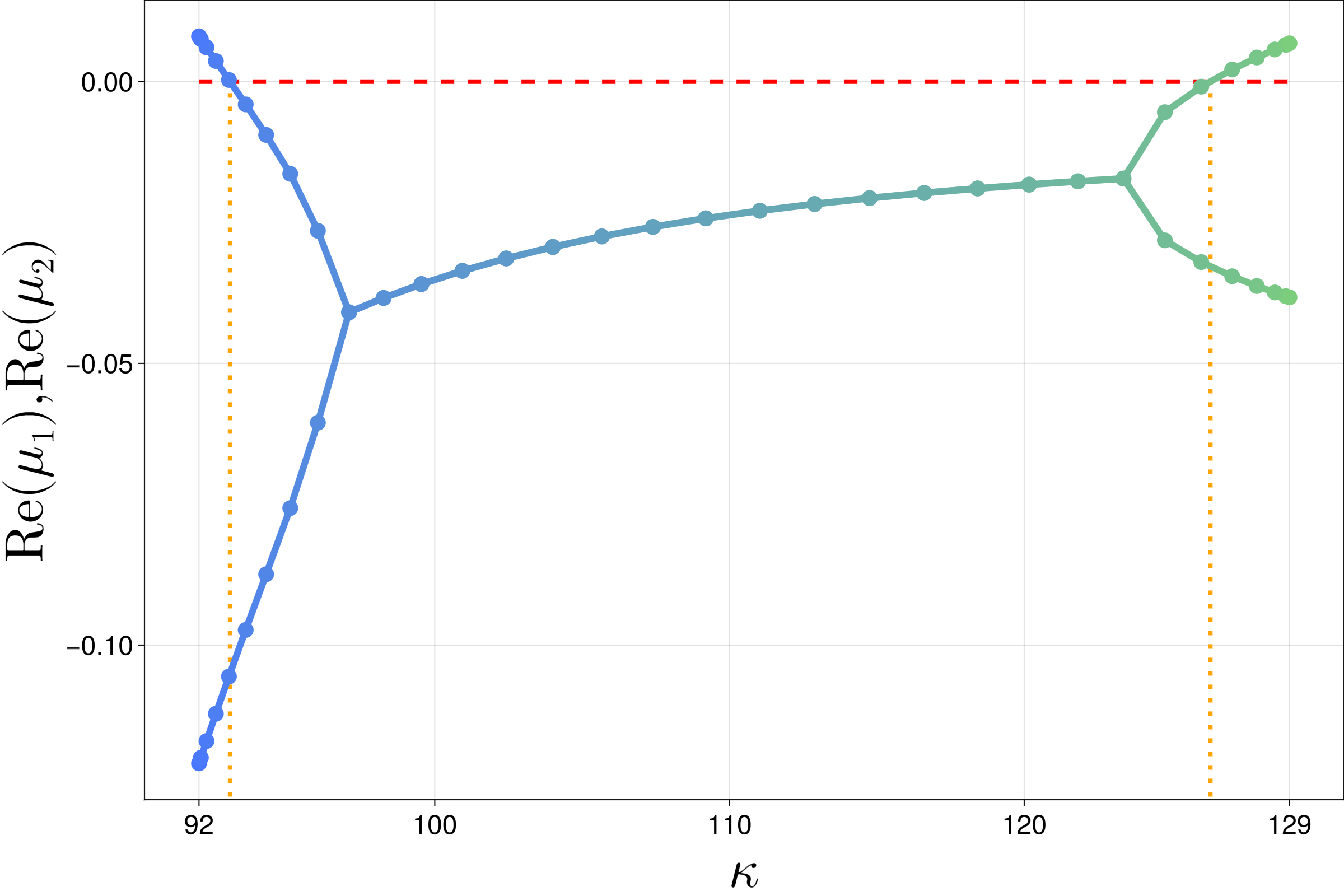}
\caption{Real part of the non-trivial Floquet exponents $\mu_1$ and $\mu_2$ associated with the global family of periodic orbits detailed in Theorem \ref{thm:main}. The dashed red line $\{\mathrm{Re}(\mu) = 0\}$ is crossed twice -- at the intersection with the vertical dotted orange lines --, corresponding to the transcritical bifurcations at the invariant boundary planes $Q_1$ and $Q_2$ around $\hat{\kappa}_1 \approx 93.0545$ and $\hat{\kappa}_2 \approx 126.3145$, respectively.
The real part of the Floquet exponents coincides when the multipliers $e^{2 \pi \mu_1}$ and $e^{2 \pi \mu_2}$ are complex conjugate.
The line width is chosen sufficiently large to encompass the error.}
\label{fig:floquet_exponents}
\end{figure}

To establish the stability of the family of periodic orbits described in Theorem \ref{thm:main}, we used a Fourier--Chebyshev approximation $\bar{\upsilon} = (\bar{C}, \bar{V}) \in \Pi_{N, K} \mathcal{Y}_\nu$ with $K = 20$, $N = 30$.
The exact solution $\tilde{\upsilon} = (\tilde{C}, \tilde{V})$ to \eqref{eq:floquet_ivp} lies within a distance $\| \tilde{\upsilon} - \bar{\upsilon} \|_{C^0} \le \| \tilde{\upsilon} - \bar{\upsilon} \|_{\mathcal{Y}_\nu} \le 5 \times 10^{-5}$; see \cite{Code}.
Figure \ref{fig:floquet_exponents} shows the non-trivial eigenvalues of $\tilde{C}$, i.e. the Floquet exponents $\mu_1$ and $\mu_2$.



\section{Outlook and future work}
\label{sec:future}

Theorem \ref{thm:main} provides a global family of positive periodic orbits in the shape of a tube, whose stable periodic orbits reach a boundary limit cycle; see Figure \ref{fig:tube}.
Nevertheless, through (typically multi-parameter) continuations, other invariant subsets or more intricate dynamics near our global family could possibly occur.
As main examples, periodic orbits may undergo bifurcations such as a Neimark--Sacker, a period-doubling, a homoclinic bifurcation, or a blue sky catastrophe.
See for instance \cite[Chapter 5--7]{Kuznetsov2023} and the references therein.
As far as we know, there is no evidence of Neimark--Sacker bifurcation giving rise to invariant tori, or blue-sky catastrophes; but there is evidence of a cascade of period-doubling bifurcations; see \cite{HsuChaos, Kryz21}.

As a matter of fact, using both $\kappa$ and $a_1$ as adjustable parameters, the proven global family at $a_1 = 10$ (see Figure \ref{fig:tube}) seems to eventually lose its stability in the interior when $a_1$ decreases.
We did not attempt to use our continuation method to rigorously track the value of $a_1$ at which the loss of stability occurs, as the global branch becomes increasingly challenging to approximate.

 As a visual cue, Figure \ref{fig:tube_a1_6} shows that the periodic orbits transition from $Q_1$ to $Q_2$ quite abruptly.
This discrepancy between the two boundary planes ``kinks'' the tube of solutions, as the Fourier--Chebyshev coefficients of the solution decay much slower.
Thus, achieving a proof in this regime requires a substantially larger number of Fourier modes $K$ and Chebyshev modes $N$.
While the use of fast Fourier transform algorithms mitigate the computational complexity with respect to the parameter (essentially, $O(N\log N)$ cost at each call site), we still need to deal with large complexity from constructing the linear operators $A$ (i.e., $O(K^3)$ cost to invert $N+1$ matrices) and performing matrix products with a truncation of $DF(\bar{\chi})$ to estimate $Z_1$.
The last cost is exacerbated by the use of interval arithmetic, with sufficiently tight enclosures to avoid loss of accuracy.
For the proof of stability, naturally, these computational considerations must likewise be taken into account.

Numerical evidence suggests that this instability arises from the occurrence of period-doubling bifurcations along the branch; see Figure \ref{fig:period_doubling}.
It may be the case that further decreasing $a_1$ results in a cascade of period-doubling bifurcations, eventually leading to a chaotic attractor in $\mathbb{R}_+^3$ around $a_1 \approx 4$; see Figure \ref{fig:attractor}.
If there is indeed a chaotic attractor, its existence, and the mechanism behind its birth deserve further study.

{While this article focused on a single Lotka--Volterra model \eqref{eq:original_model}, the CAP-based continuation framework we have presented is not limited to this example. In the future, it can be adapted and automated to explore broader families of Lotka--Volterra systems \eqref{eq:general_model} with varying dimensions and interaction structures. This opens the possibility of systematically investigating mechanisms of stable coexistence and exclusion across models that were previously intractable.
By providing a rigorous computational approach, our work lays a foundation for a more comprehensive understanding of resource-competition dynamics in complex ecological systems.}

\begin{figure}[H]
\centering
\begin{subfigure}[b]{0.25\textwidth}
\includegraphics[width=\textwidth]{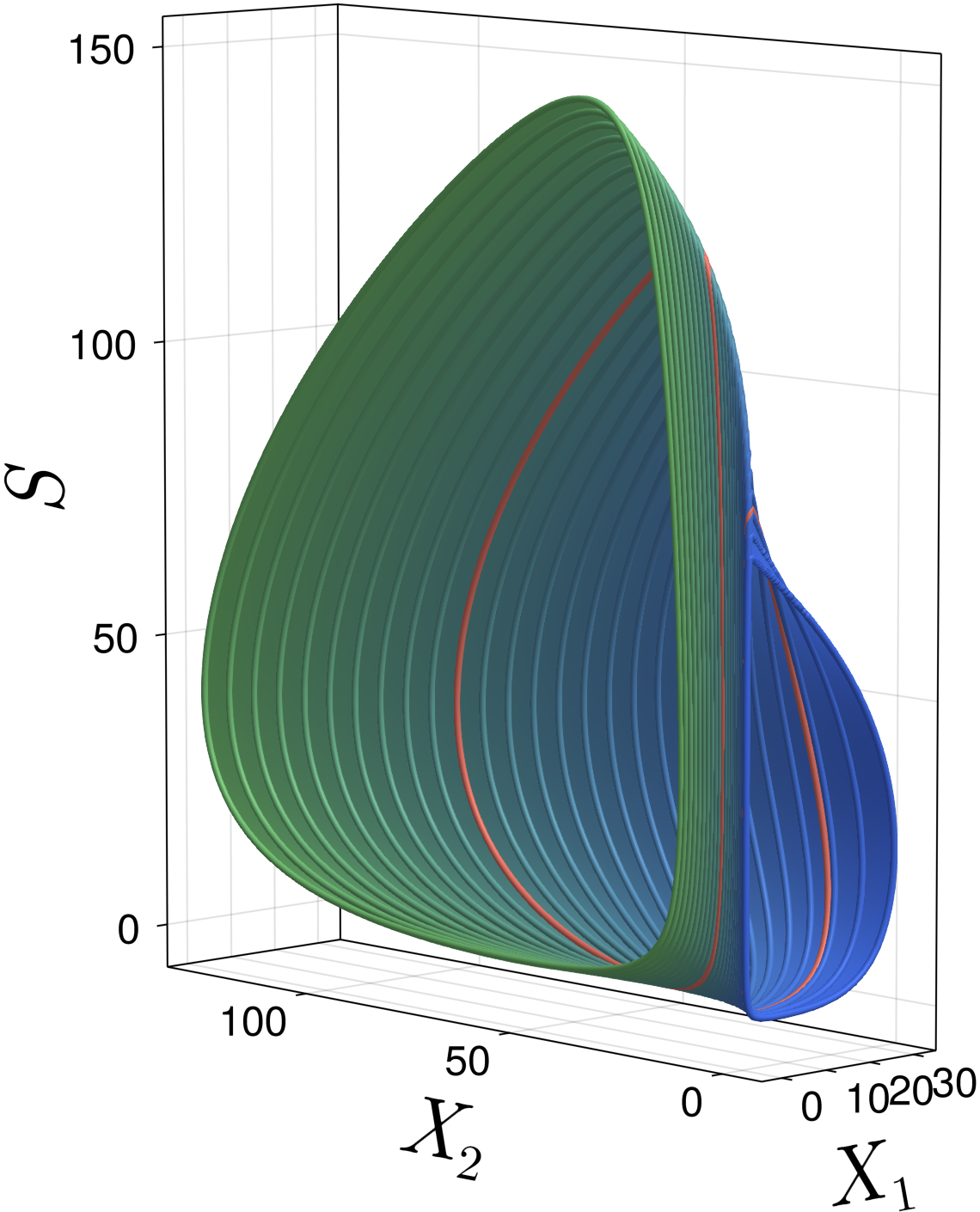}
\caption{Numerical global family.}
\label{fig:tube_a1_6}
\end{subfigure}
\hspace{0.02\textwidth}
\begin{subfigure}[b]{0.37\textwidth}
\includegraphics[width=\textwidth]{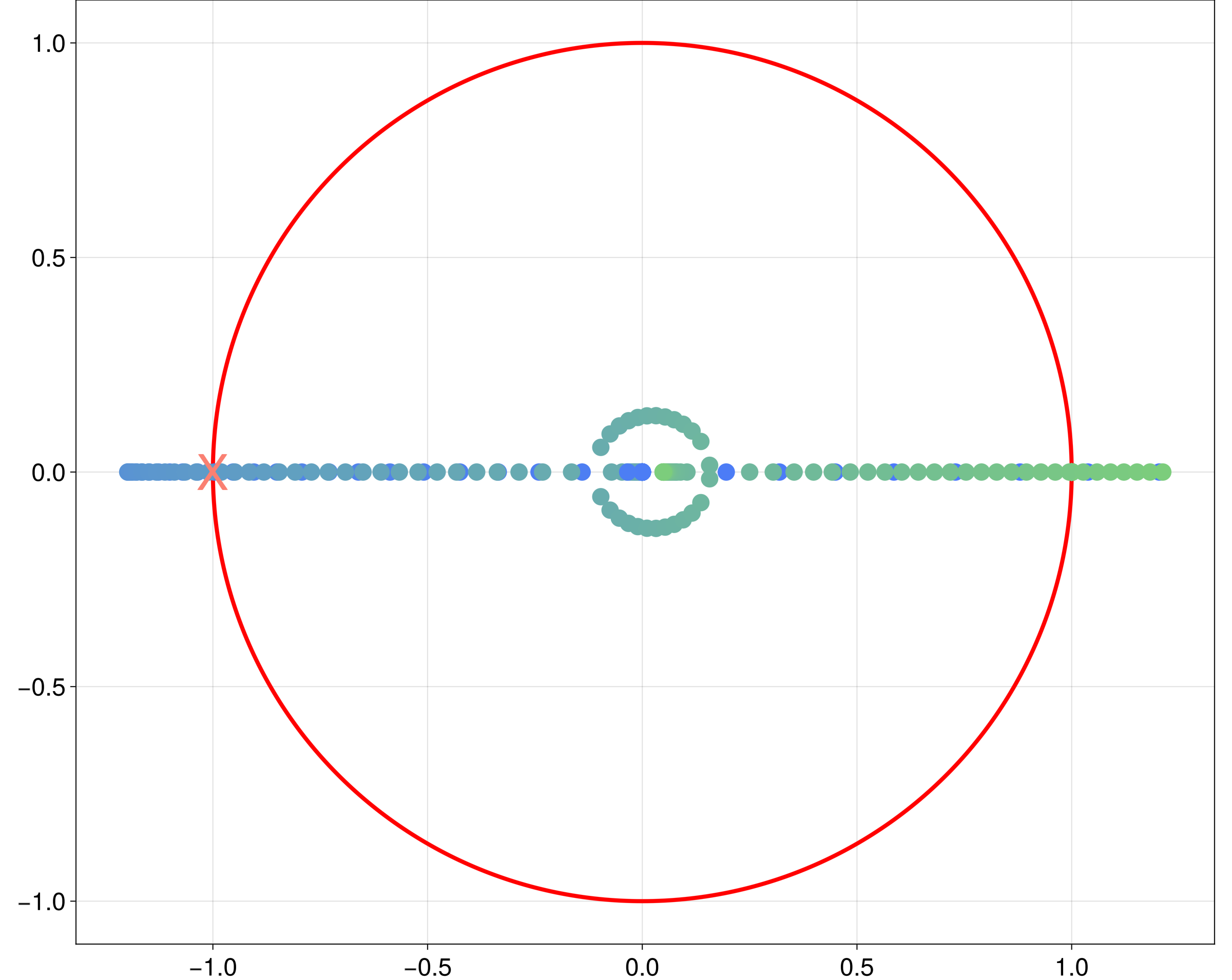}
\caption{Numerical Floquet multipliers.}
\label{fig:period_doubling}
\end{subfigure}
\hspace{0.02\textwidth}
\begin{subfigure}[b]{0.3\textwidth}
\includegraphics[width=\textwidth]{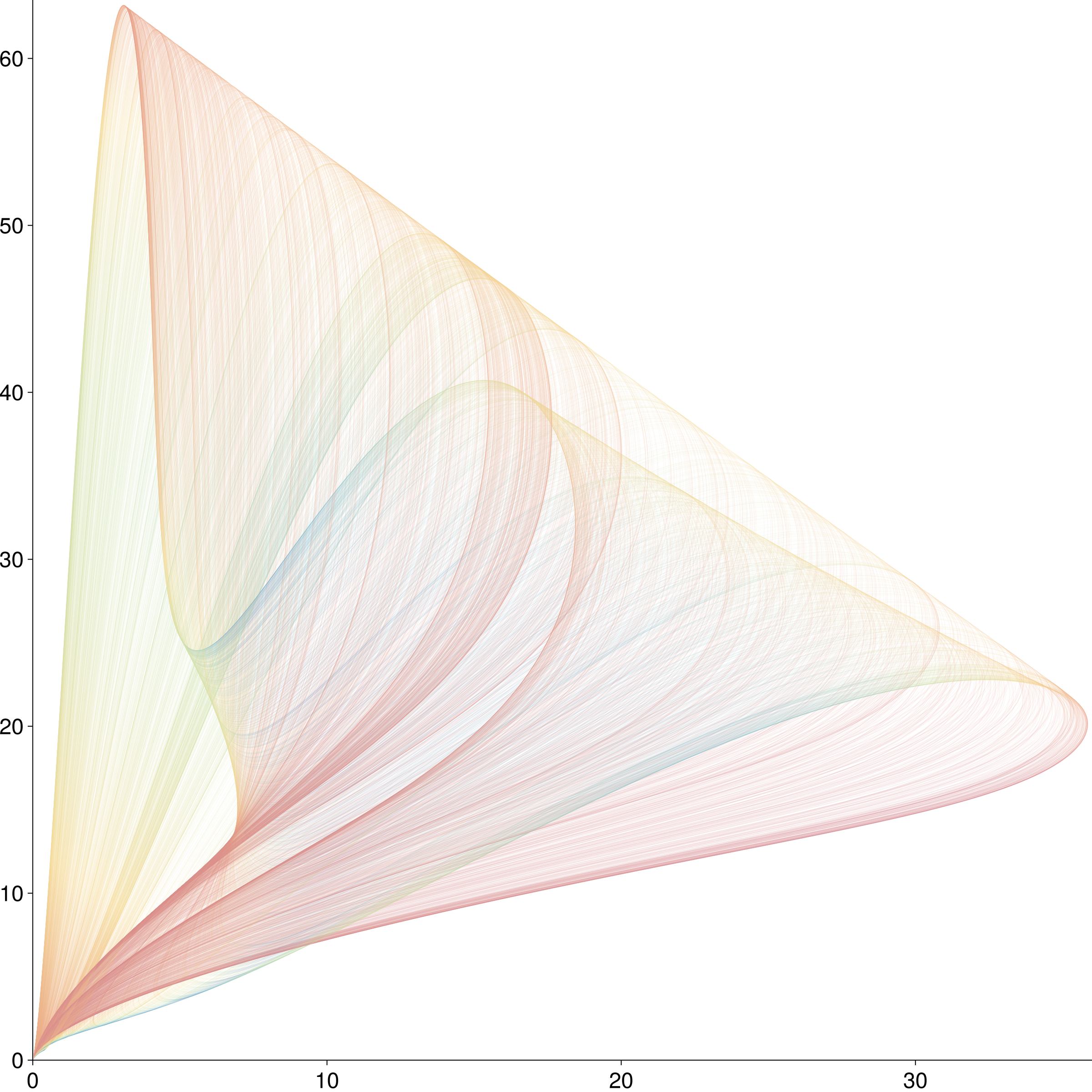}
\caption{Numerical chaotic attractor.}
\label{fig:attractor}
\end{subfigure}
\caption{The parameter values are $a_2 = 41$, $d_1 = 0.8$, $d_2 = 0.5$, and $m_1 = m_2 = y_1 = y_2 = \gamma = 1$.
(a)~Numerical approximation of the global family for $a_1 = 6$. The orange rings corresponds to a numerical observation of a period-doubling bifurcation.
(b) Numerical approximation of the Floquet multipliers associated to (a). The orange cross marks the crossing through $-1$, i.e., a potential period-doubling bifurcation.
(c) Projection onto the $(X_1, X_2)$-plane of a numerical chaotic attractor in $\mathbb{R}^3_+$ for $a_1 = 4$.}
\end{figure}

\section{Acknowledgments}

We are grateful to Sze-Bi Hsu for the suggestion of solving the stable connection problem and to Bernold Fiedler for many inspiring discussions.
J.-Y.~Dai was supported by NSTC (National Science and Technology Council) under grant No. 113-2628-M-007-005-MY4.
O.~H\'{e}not was supported by ANR under project No. CAPPS: ANR-23-CE40-0004-01, and by NSTC under grant No. 115-2115-M-002-001-MY2.
O.~Hénot also thanks the National Center for Theoretical Sciences for its support during the preparation of this work.
P.~Lappicy was supported by Marie Skłodowska--Curie Actions, UNA4CAREER H2020 Cofund, 847635, with the project DYNCOSMOS.
N.~Vassena was supported by the DFG (German Research Society), project No. 512355535.

\appendix

\section{Contraction argument for the Floquet normal form}
\label{app:contraction}

The purpose of this appendix is to provide some insight on the rigorous computation of the Floquet normal form of the fundamental matrix solution to \eqref{eq:floquet_ivp}.
This is a variation of the approach presented in \cite{FloquetNormalFormCAP}, adapted to work with our continuation method.

In the context of this manuscript, we want to verify Theorem~\ref{thm:nk} in the Banach space $\mathcal{Y}_\nu$ and the fixed-point operator
\begin{equation}
\upsilon \quad \mapsto \quad \upsilon - B G(\upsilon),
\end{equation}
thereby obtaining a zero of the mapping $G$ defined in \eqref{eq:G}.
Similarly to the construction of the injective linear operator $A$, we set $B \in \mathscr{B}(\mathcal{Y}_\nu, \mathcal{Y}_\nu)$ as
\begin{equation}
B \bydef B_\textnormal{finite} \Pi_K + B_\textnormal{tail} \Pi_{> K},
\end{equation}
with $B_\textnormal{finite} \in \mathscr{B}(\Pi_K \mathcal{Y}_\nu, \Pi_K \mathcal{Y}_\nu)$ an approximation of $(\Pi_{N,K} DG(\bar{\upsilon}) \Pi_{N,K})^{-1}$ and $B_\textnormal{tail} : \Pi_{> K} \mathcal{Y}_\nu \to \Pi_{> K} \mathcal{Y}_\nu$ is defined, for all $\upsilon = (C, V) \in \mathcal{Y}_\nu$, by
\begin{equation}
B_\textnormal{tail} \upsilon \bydef (0, I_{3\times 3} \otimes V'), \qquad
(V_{ij}')_k \bydef
\begin{cases}
0, & |k| \le K, \\
(ik)^{-1} (V_{ij})_k, & |k| > K,
\end{cases} \quad i,j = 1, 2, 3.
\end{equation}
The symbol $\otimes$ denotes the Kronecker product between 3-by-3 matrices; this operation comes out from matrix differentiation.
In particular, for $\upsilon = (C, V) \in \mathcal{Y}_\nu$,
\begin{equation*}
DG(\upsilon) =
\begin{pmatrix}
0 & I_{3\times 3} \otimes E_0 \Pi_{N,K} \\
I_{3\times 3} \otimes V & I_{3\times 3} \otimes \partial_t + C^T \otimes I_{3\times 3} - I_{3\times 3} \otimes \tilde{\tau} \partial_u f(\tilde{\zeta}_1, \tilde{\zeta}_2, \tilde{u})
\end{pmatrix},
\end{equation*}
where $C^T$ is the 3-by-3 matrix transpose of $C$ and $E_0 : \textnormal{Mat}_{3\times 3} (\mathcal{U}_\nu) \to \textnormal{Mat}_{3\times 3} (\mathcal{P}_\nu)$ is the evaluation operator at $0$, i.e. $E_0 V \bydef V(0)$.

{We apply Theorem~\ref{thm:nk}.}
Formulas for $\|BG(\bar{\upsilon})\|_{\mathcal{Y}_\nu}$, $\|BDG(\bar{\upsilon}) - I\|_{\mathscr{B}(\mathcal{Y}_\nu, \mathcal{Y}_\nu)}$, and $\sup_{\upsilon \in B_R(\bar{\upsilon})} \|BD^2G(\upsilon)\|_{\mathscr{B}(\mathcal{Y}_\nu, \mathcal{Y}_\nu)}$ are in fact much simpler than those obtained for $F$ in Sections \ref{sec:Ybound}--\ref{sec:Z2bound}.
Indeed, $G$ has only the single quadratic term $V C$.
Thus, we can freely pick $R = \infty$, and $\|BD^2G(\upsilon)\|_{\mathscr{B}(\mathcal{Y}_\nu, \mathcal{Y}_\nu)} \le 2 \| B \|_{\mathscr{B}(\mathcal{Y}_\nu, \mathcal{Y}_\nu)}$ for all $\upsilon \in \mathcal{Y}_\nu$.

By construction, $\|B\|_{\mathscr{B}(\mathcal{Y}_\nu, \mathcal{Y}_\nu)} = \max\Big(\|B_\textnormal{finite}\|_{\mathscr{B}(\mathcal{Y}_\nu, \mathcal{Y}_\nu)}, \frac{1}{K+1}\Big)$, and it remains two control $\| B G (\bar{\upsilon}) \|_{\mathcal{Y}_\nu}$ and $\|B DG (\bar{\upsilon}) - I\|_{\mathscr{B}(\mathcal{Y}_\nu, \mathcal{Y}_\nu)}$.
In regards to $\| B G (\bar{\upsilon}) \|_{\mathcal{Y}_\nu}$, we simply use the triangle inequality
\[
\| B G (\bar{\upsilon}) \|_{\mathcal{Y}_\nu}
\le \| B \|_{\mathscr{B}(\mathcal{Y}_\nu, \mathcal{Y}_\nu)} \| G (\bar{\upsilon}) \|_{\mathcal{Y}_\nu},
\]
where computing $G (\bar{\upsilon})$ amounts to being able to calculate $\tilde{\tau} \partial_u f(\tilde{u}, \tilde{\zeta}_1, \tilde{\zeta}_2) \bar{V}$ rigorously which is done by means of Lemma \ref{lem:Z2_inv}.
At last, define
\begin{equation}
\hat{W}_1 \bydef
\begin{pmatrix}
0 & I_{3\times 3} \otimes E_0 \Pi_{N,K} \\
I_{3\times 3} \otimes \bar{V} & I_{3\times 3} \otimes \partial_t + \bar{C}^T \otimes I_{3\times 3} - \omega_1
\end{pmatrix},
\end{equation}
where $\omega_1$ is a banded operator introduced in Section \ref{sec:Z1bound} to approximate $\bar{\tau} \partial_u f(\bar{u}, \bar{\zeta}_1, \bar{\zeta}_2)$.
Then,
\begin{align*}
\| B DG (\bar{\upsilon}) - I\|_{\mathscr{B}(\mathcal{Y}_\nu, \mathcal{Y}_\nu)}
&\le \| B \hat{W}_1 - I \|_{\mathscr{B}(\mathcal{Y}_\nu, \mathcal{Y}_\nu)} + \| B \|_{\mathscr{B}(\mathcal{Y}_\nu, \mathcal{Y}_\nu)} \| DG(\bar{\upsilon}) - \hat{W}_1 \|_{\mathscr{B}(\mathcal{Y}_\nu, \mathcal{Y}_\nu)},
\end{align*}
with
\begin{align*}
\| DG(\bar{\upsilon}) - \hat{W}_1 \|_{\mathscr{B}(\mathcal{Y}_\nu, \mathcal{Y}_\nu)}
& = \| \tilde{\tau} \partial_u f (\tilde{\zeta}_1, \tilde{\zeta}_2, \tilde{u}) - \omega_1 \|_{\mathscr{B}(\mathcal{U}_\nu, \mathcal{U}_\nu)}
\\&
\le \max_{1 \le j \le 3} \sum_{i=1}^3 \| \tilde{\tau} \partial_{u_j} f_i (\tilde{\zeta}_1, \tilde{\zeta}_2, \tilde{u}) - \omega_1^{(i,j)} \|_\nu,
\end{align*}
and, motivated by the banded structure of $\hat{W}_1$, we write
\begin{equation} \label{eq-81}
\| B \hat{W}_1 - I \|_{\mathscr{B}(\mathcal{Y}_\nu, \mathcal{Y}_\nu)}
= \max\left(
\| B \hat{W}_1 \Pi_{2K} - \Pi_{2K} \|_{\mathscr{B}(\mathcal{Y}_\nu, \mathcal{Y}_\nu)}, \| B \hat{W}_1 \Pi_{> 2K} - \Pi_{> 2K} \|_{\mathscr{B}(\mathcal{Y}_\nu, \mathcal{Y}_\nu)}
\right).
\end{equation}
For the first term in the right-hand side of \eqref{eq-81}, we have
\begin{align*}
&\|B \hat{W}_1 \Pi_{2K} - \Pi_{2K}\|_{\mathscr{B}(\mathcal{Y}_\nu, \mathcal{Y}_\nu)} \\
&= \|B_\textnormal{finite} \Pi_K \hat{W}_1 \Pi_{2K} + B_\textnormal{tail} \Pi_{> K} W_1 \Pi_{2K} - \Pi_{2K}\|_{\mathscr{B}(\mathcal{Y}_\nu, \mathcal{Y}_\nu)} \\
&\le \| B_\textnormal{finite} \Pi_K \hat{W}_1 \Pi_{2K} - \Pi_K \|_{\mathscr{B}(\mathcal{Y}_\nu, \mathcal{Y}_\nu)} + \|B_\textnormal{tail} \Pi_{> K} W_1 \Pi_{2K} + \Pi_K - \Pi_{2K}\|_{\mathscr{B}(\mathcal{Y}_\nu, \mathcal{Y}_\nu)} \\
&\le \| B_\textnormal{finite} \Pi_K \hat{W}_1 \Pi_{2K} - \Pi_K \|_{\mathscr{B}(\mathcal{Y}_\nu, \mathcal{Y}_\nu)} + \frac{1}{K + 1} \|\Pi_{> K} \omega_1 \Pi_{2K} + \Pi_{> K} \Pi_{2K} + \Pi_K - \Pi_{2K}\|_{\mathscr{B}(\mathcal{U}_\nu, \mathcal{U}_\nu)} \\
&\le \| B_\textnormal{finite} \Pi_K \hat{W}_1 \Pi_{2K} - \Pi_K \|_{\mathscr{B}(\mathcal{Y}_\nu, \mathcal{Y}_\nu)} + \frac{1}{K + 1} \max_{1 \le j \le 3} \sum_{i = 1}^3 \|\omega_1^{(i,j)}\|_\nu.
\end{align*}
For the second one, we find
\begin{align*}
\|B \hat{W}_1 \Pi_{> 2K} - \Pi_{> 2K}\|_{\mathscr{B}(\mathcal{Y}_\nu, \mathcal{Y}_\nu)}
&\le \|B_\textnormal{finite}\|_{\mathscr{B}(\mathcal{Y}_\nu, \mathcal{Y}_\nu)} \|\Pi_{K} \omega_1 \Pi_{> 2K}\|_{\mathscr{B}(\mathcal{U}_\nu, \mathcal{U}_\nu)} \\
&\qquad+ \|B_\textnormal{tail}\|_{\mathscr{B}(\mathcal{Y}_\nu, \mathcal{Y}_\nu)} \|\Pi_{> K} \omega_1 \Pi_{> 2K}\|_{\mathscr{B}(\mathcal{U}_\nu, \mathcal{U}_\nu)} \\
&\le \frac{1}{K+1} \max_{1 \le j \le 3} \sum_{i = 1}^3 \|\omega_1^{(i,j)}\|_\nu.
\end{align*}

\bibliographystyle{abbrv}
\bibliography{references}

\end{document}